\documentclass[a4paper]{article}
\usepackage{amsmath,amsthm,amssymb,enumerate,float,amscd,layout,mathrsfs,color}
\setlength{\oddsidemargin}{-0.5em}
\setlength{\textwidth}{45em}
\newtheorem{dfn}{Definition}[section]
\newtheorem{thm}[dfn]{Theorem}

\newtheorem{lem}[dfn]{Lemma}

 \makeatletter
 \@addtoreset{equation}{section} 
\makeatother
\newcommand{\N}{\mathbb{N}}

\newcommand{\R}{\mathbb{R}}

\newcommand{\A}{\mathcal{A}}

\newcommand{\Hom}{\mathop{\mathrm{Hom}}\nolimits}

\def\Xint#1{\mathchoice
 {\XXint\displaystyle\textstyle{#1}}%
 {\XXint\textstyle\scriptstyle{#1}}%
 {\XXint\scriptstyle\scriptscriptstyle{#1}}%
 {\XXint\scriptscriptstyle\scriptscriptstyle{#1}}%
 \!\int} \def\XXint#1#2#3{{\setbox0=\hbox{$#1{#2#3}{\int}$}
 \vcenter{\hbox{$#2#3$}}\kern-.5\wd0}} 
\def\dashint{\Xint{-}}
\makeatletter
 \def\@maketitle{% 
\begin{flushright}% 
{\large \@date}%  
\end{flushright}% 
\par\vskip 1.5em
\begin{center}% 
{\LARGE \@title \par}%  
\end{center}% 
\begin{center}% 
{\large \@author}% 
\end{center}% 
\par\vskip 1.5em
 }
\title{Partial regularity for parabolic systems with VMO-coefficients}
\author{Taku Kanazawa \\
Graduate School of Mathematics \\
Nagoya University, JAPAN
}
\date{Ver. Dec. 17 2013 
}
\begin{document}
\maketitle

\begin{center}
\begin{minipage}[t]{10cm}
%-- first page ------------------------- Abstract
\small{
\noindent \textbf{Abstract.}
We establish a partial H\"older continuity for vector-valued solutions $u:\Omega_T\to\R^N$ to parabolic systems
of the type:
\[
 u_t-\mathrm{div}\bigl( A(x,t,u,Du)\bigr) = H(x,t,u,Du) \qquad \mathrm{in}\>\Omega\times (-T,0),
\]
where the coefficients $A:\Omega\times(-T,0)\times\R^N\times\Hom(\R^n,\R^N)\to\Hom(\R^n,\R^N)$ are possibly discontinuous 
with respect to $(x,t)$. More precisely, we assume a VMO-condition with respect to $(x,t)$ and continuity with respect to $u$ 
and prove H\"older continuity of the solutions outside of singular sets.
\medskip

%-- first page ------------------------- Keywords
\noindent \textbf{Keywords.} Nonlinear parabolic systems, Partial regularity, VMO-coefficients, $\A$-caloric approximation.
\medskip

%-- first page ------------------------- AMS classification codes
\noindent \textbf{Mathematics~Subject~Classification~(2010):}
35K40, 35K55, 35B65.

}
\end{minipage}
\end{center}

\section{Introduction}
In this paper, we establish a partial regularity result of weak solutions to second order nonlinear parabolic systems 
of the following type:
\begin{equation}
 u_t-\mathrm{div}\bigl( A(z,u,Du)\bigr) = H(z,u,Du), \qquad z=(x,t)\in\Omega\times (-T,0)=:\Omega_T ,
 \label{system}
\end{equation}
where $\Omega$ denotes a bounded domain in $\R^n$, $n\geq 2$, $T>0$, $u$ takes values in $\R^N$, $N\geq 1$, and the vector field 
$A\colon\Omega_T\times\R^N\times\mathrm{Hom}(\R^n,\R^N)\to\Hom(\R^n,\R^N)$ fulfills the $p$-growth condition, 
$p\geq 2$, and the VMO-condition. More precisely, we assume that the partial mapping
$z\mapsto A(z,u,w)/(1+|w|)^{p-1}$ has vanishing mean oscillation (VMO), uniformly in $(u,w)$. This means that
$A$ satisfies the estimate
\[
 |A(z,u,w)-(A(\cdot,u,w))_{z_0,\rho}|\leq V_{z_0}(z,\rho)(1+|w|)^{p-1},
\]
where $V_{z_0}:\R^{n+1}\times[0,\rho_0]\to[0,2L]$ are bounded functions with 
\[
 \lim_{\rho\searrow 0}V(\rho)=0, \quad 
 V(\rho):=\sup_{z_0\in\Omega_T}\sup_{0<r\leq\rho}\dashint_{Q_r(z_0)\cap\Omega_T}V_{z_0}(z,r)dz.
\]
The vector field $A$ also satisfies the $p$-growth condition such as 
\[
 \left\lvert A(z,u,w)\right\rvert+(1+\lvert w\rvert)
 \left\lvert \partial_w A(z,u,w) \right\rvert\leq L(1+\lvert w\rvert)^{p-1}
\] 
for all $z\in\Omega_T$, $u\in\R^N$ and $w\in\Hom(\R^n,\R^N)$. Moreover $A$ is continuous with respect to $u$.
Roughly speaking, under the above assumptions, we prove 
that the bounded weak solutions of \eqref{system} are H\"older continuous on 
some open set $\Omega_u\subset\Omega_T$, i.e., $u\in C^{\alpha,\alpha/2}(\Omega_u,\R^N)$ (see Theorem \ref{pr}). 

Regularity problem of weak solutions to parabolic systems are already proved for nonlinear systems with $p=2$ by 
Duzaar-Mingione \cite{DM}, for $p\geq 2$ by Duzaar-Mingione-Steffen \cite{DMS}, 
for $1<p<2$ by Scheven \cite{Scheven} and even on the boundary by B\"ogelein-Duzaar-Mingione \cite{BDM1,BDM2}.  
These previous results are based on the technique 
so called ``$\A$-caloric approximation''(see Lemma \ref{A-caloric}) and proved under the condition that the vector field 
$A(z,u,w)$ are H\"older continuous with respect to $(z,u)$, i.e., there exists a non-decreasing function 
$K\colon [0,\infty)\to [1,\infty)$ and $\beta\in (0,1)$ such that the inequality 
\[
 \lvert A(z,u,w)-A(z_0,u_0,w)\rvert\leq K(\lvert u\rvert) 
 (\lvert x-x_0\rvert +\sqrt{\lvert t-t_0\rvert}+\lvert u-u_0\rvert)^\beta(1+\lvert w\rvert^{p-1})
\] 
holds for every $z=(x,t), z_0=(x_0,t_0)\in\Omega_T$, $u,u_0\in\R^N$ and for all $w\in\Hom(\R^n,\R^N)$. 

The $\A$-caloric approximation technique has its origin in the classical harmonic approximation lemma of De Giorgi in version of Simon
\cite{DeGiorgi1,Simon}. It was first applied to nonlinear elliptic systems with quadratic growth condition ($p=2$) 
by Duzaar-Grotowski \cite{DG}, namely ``$\A$-harmonic approximation''. 
Using this method, we could obtain the optimal regularity result without the reverse H\"older inequalities,  
i.e., if the ``coefficients'' $A(x,u,w)$ are H\"older continuous in $(x,u)$ with some
H\"older exponent $\beta\in (0,1)$ then $Du$ is H\"older continuous with the same exponent $\beta$ on some open set $\Omega_u$. 

Then the $\A$-harmonic approximation technique has been used to prove the regularity result for elliptic systems with 
super-quadratic growth ($p\geq 2$) and for the case of sub-quadratic growth ($1<p<2$) 
by Chen-Tan \cite{CT1,CT2}. The $\A$-harmonic approximation technique also 
works for boundary regularity which was proved by Grotowski \cite{Gr}. 
Moreover, a relation between the regularity of weak solutions and the smoothness of coefficients is studied. 
Duzaar-Gastel \cite{DGa} proved that weak solutions has $C^1$-regularity 
if the coefficients satisfies Dini-type condition (which is weaker assumption 
than H\"older continuity condition). The continuous coefficients would not ensure the continuity (and not even boundedness) of 
the gradient $Du$ but Foss-Mingione \cite{FM} showed that we could still except the local H\"older continuity of the solution $u$ itself . 
The H\"older continuity for the solution $u$ can also be guaranteed under discontinuous coefficients such as the VMO-condition 
in elliptic setting, which was proved for homogeneous systems by B\"ogelein-Duzaar-Habermann-Scheven \cite{BDHS} 
and for inhomogeneous systems by author \cite{Kana}. 

On the other hand, $\A$-harmonic approximation technique is adapted to parabolic systems, renamed as 
``$\A$-caloric approximation'' \cite{DM,DMS}, and it lead us to the partial regularity result for weak solutions in 
parabolic setting with H\"older continuous coefficients. Dini-type condition and the condition under continuous coefficients are 
also proved by Baroni \cite{Baroni}, B\"ogelein-Duzaar-Mingione \cite{BFM} and Foss-Geisbauer \cite{FG}. However, as far as we know, 
no one has been proved regularity result under discontinuous coefficients in parabolic systems. In this paper, we proved  
the regularity result under the VMO-condition which is the parabolic version of \cite{Kana} (see Theorem \ref{pr}).

\section{Statement of the results}
Before we start setting the structure conditions, let us collect some notations which we will use throughout the paper. 
As mentioned above, we consider a cylindrical domain $\Omega_T=\Omega\times(-T,0)$ where 
$\Omega$ is a bounded domain in $\R^n$, $n\geq 2$, and $T>0$. $u$ maps from $\Omega_T$ to $\R^N$, $N\geq 1$, and $Du$ denotes 
the gradient with respect to the special variables $x$, i.e., $Du(x,t)\equiv D_x u(x,t)$.  
We write $B_\rho(x_0):=\{ x\in\R^n\> :\> \lvert x-x_0\rvert <\rho\}$ and 
$Q_\rho(z_0):=B_\rho(x_0)\times (t_0-\rho^2, t_0)$ where $z_0=(x_0,t_0)\in\Omega_T$. The parabolic metric $d_{\mathrm{par}}$ is given by 
\begin{equation}
 d_{\mathrm{par}}(z,z_0)=\max\Bigl\{ \lvert x-x_0\rvert , \sqrt{\lvert t-t_0\rvert}\Bigr\} \qquad \text{for}\ 
 z=(x,t),z_0=(x_0,t_0)\in\Omega_T , \label{para-metric}
\end{equation}
and for a given set $X$ we denote by $\mathcal{H}^{n+2}_{\mathrm{par}}(X)$ the $(n+2)$-dimensional parabolic Hausdorff measure which is 
defined by 
\[
 \displaystyle\mathcal{H}^{n+2}_{\mathrm{par}}(X)=\sup_{\delta >0}\mathcal{H}^{n+2,\delta}_{\mathrm{par}}(X),
\]
where 
\[
 \displaystyle\mathcal{H}^{n+2,\delta}_{\mathrm{par}}(X)=\inf\left\{ \sum_{i=1}^\infty R_i^{n+2}\ :
 \ X\subset \bigcup_{i=1}^\infty Q_{R_i}(z_i),\ R_i\leq \delta\right\} .
\]
Note that $\mathcal{H}^{n+2}_{\mathrm{par}}$ is equivalent to the Lebesgue measure in $\R^{n+1}$, $\mathcal{L}^{n+1}$. 
For a bounded set $X\subset\R^{n+1}$ with $\mathcal{L}^{n+1}(X)>0$,
 we denote the average of a given function $g\in L^1(X,\R^N)$ by 
$\dashint_Xgdz$, that is, $\dashint_Xgdz =\frac{1}{\mathcal{L}^{n+1}(X)}\int_Xgdz$. In particular, we write
$g_{z_0,\rho}=\dashint_{Q_\rho(z_0)\cap\Omega}gdz$. We write $\mathrm{Bil}(\Hom(\R^n,\R^N))$
for the space of bilinear forms on the space $\Hom(\R^n,\R^N)$ of linear maps from $\R^n$ to
$\R^N$. We denote $c$ a positive constant, possibly varying from line by line. Special occurrences will be denoted by 
capital letters $K$, $C_1$, $C_2$ or the like.  

\begin{dfn}\label{wsol}
We say $u\in C^0(-T,0;L^2(\Omega,R^N))\cap L^p(-T,0;W^{1,p}(\Omega,\R^N))$, $p\geq 2$ 
is a weak solution of \eqref{system} if $u$ satisfies
\begin{equation}
 \int_{\Omega_T}\Bigl( \langle u,\varphi_t\rangle - \langle A(z,u,Du),D\varphi\rangle\Bigr) dz
 =\int_{\Omega_T} \langle H,\varphi\rangle dz \label{ws}
\end{equation}
for all $\varphi\in C^{\infty}_0(\Omega_T,\R^N)$, where $\langle\cdot,\cdot\rangle$ is the standard Euclidean 
inner product on $\R^N$ or $\R^{nN}$.
\end{dfn}

We assume the following structure conditions.
\begin{enumerate}
\item[({\bf H1})]
 $A(z,u,w)$ is differentiable in $w$ with continuous derivatives, that is, there exists
 $L\geq 1$ such that
\begin{equation}
 \left\lvert A(z,u,w)\right\rvert+(1+\lvert w\rvert)
 \left\lvert \partial_w A(z,u,w) \right\rvert\leq L(1+\lvert w\rvert)^{p-1}
\end{equation}
for all $z\in\Omega_T$, $u\in\R^N$ and $w\in\Hom(\R^n,\R^N)$. Moreover, from this we deduce the modulus of continuity 
function $\mu:[0,\infty)\to [0,\infty)$ such that $\mu$ is bounded, concave, non-decreasing and we have
\begin{equation}
 \left\lvert \partial_w A(z,u,w)-\partial_w A(z,u,w_0)\right\rvert
 \leq L\mu\left( \frac{\lvert w-w_0\rvert}{1+\lvert w\rvert+\lvert w_0\rvert}\right)
 (1+\lvert w\rvert+\lvert w_0\rvert)^{p-2}
\end{equation}
for all $z\in\Omega_T$, $u\in\R^N$, $w,w_0\in\Hom(\R^n,\R^N)$. Without loss of generality, we may assume $\mu\leq 1$.
\item[({\bf H2})]
 $A(z,u,w)$ is uniformly strongly elliptic, that is, for some $\lambda>0$ we have
\begin{equation}
 \biggl\langle \partial_w A(z,u,w)\tilde{w},\tilde{w} \biggr\rangle
 :=\sum_{\substack{1\leq i,\beta\leq N\\ 1\leq j,\alpha\leq n}}
 \partial_{w_\beta^j}A_\alpha^i(z,u,w)\tilde{w}_i^\alpha\tilde{w}_j^\beta
 \geq \lambda \lvert\tilde{w}\rvert^2(1+\lvert w\rvert^2)^{(p-2)/2}
\end{equation}
for all $z\in\Omega_T$, $u\in\R^N$, $w,\tilde{w}\in\Hom(\R^n,\R^N)$. 
\item[({\bf H3})]
 $A(z,u,w)$ is continuous with respect to $u$. There exists a bounded, concave and non-decreasing function 
$\omega:[0,\infty)\to [0,\infty)$ satisfying  
\begin{equation}
 \lvert A(z,u,w)-A(z,u_0,w)\rvert\leq L\omega\left( \lvert u-u_0\rvert^2\right)
 (1+\lvert w\rvert)^{p-1}
\end{equation}
for all $z\in\Omega_T$, $u,u_0\in\R^N$, $w\in\Hom(\R^n,\R^N)$. Without loss of generality, we may assume $\omega\leq 1$.
\item[({\bf H4})]
 $z\mapsto A(z,u,w)/(1+\lvert w\rvert)^{p-1}$ fulfils the following VMO-condition uniformly in $u$ and $w$:
\[
 \lvert A(z,u,w)- \left( A(\cdot ,u,w)\right)_{z_0 ,\rho} \rvert \leq
 V_{z_0}(z,\rho)(1+\lvert w\rvert)^{p-1}, \qquad \text{for all }z\in Q_{\rho}(z_0)
\]
whenever $z_0\in\Omega_T$, $0<\rho<\rho_0$, $u\in\R^N$ and $w\in\Hom(\R^n,\R^N)$, where $\rho_0>0$ and 
$V_{z_0}:\R^n\>\times\> [0,\rho_0]\to[0,2L]$ are bounded functions satisfying 
\begin{equation}
 \lim_{\rho\searrow 0}V(\rho)=0, \qquad V(\rho):=\sup_{z_0\in
 \Omega_T}\sup_{0<r\leq\rho}\dashint_{Q_r(z_0)\cap\Omega}V_{z_0}(z,r)dz.
\end{equation}
\item[({\bf H5})]
 $H(z,u,w)$ has $p$-growth, that is, there exist constants $a,b\geq 0$, with $a$ possibly depending on $M>0$,
such that 
\begin{equation}
 \lvert H(z,u,w)\rvert\leq a(M)\lvert w\rvert^p +b
\end{equation}
for all $z\in\Omega_T$, $u\in\R^N$ with $\lvert u\rvert\leq M$ and $w\in\Hom(\R^n,\R^N)$.
\end{enumerate}

Under these structure conditions, we proved the following theorem. 

\begin{thm}\label{pr}
Let $u\in C^0_b(-T,0;L^2(\Omega,R^N))\cap L^p(-T,0;W^{1,p}(\Omega,\R^N))$ be a bounded weak solution of the parabolic system 
\eqref{system} under the structure condition {\rm ({\bf H1}), ({\bf H2}), ({\bf H3}), ({\bf H4})} and {\rm ({\bf H5})} 
with satisfying $\| u\|_{\infty}\leq M$ and $2^{(10-9p)/2}\lambda>a(M)M$. Then there exists an open set $\Omega_u\subset\Omega_T$ 
such that $u\in C^{\alpha, \alpha/2}(\Omega_u, \R^N)$ with $\mathcal{H}^{n+2}_{\mathrm{par}}(\Omega_T\setminus\Omega_u)=0$ 
for every $\alpha\in (0,1)$. 
Moreover, $\Omega_T\setminus\Omega_u\subset\Sigma_1\cup\Sigma_2$ and 
\begin{align*}
 \Sigma_1 &:=\left\{ z_0\in\Omega_T\> :\> \liminf_{\rho\searrow 0}\dashint_{Q_\rho(z_0)}\lvert Du-(Du)_{z_0,\rho}\rvert^p dz>0\right\}, \\ 
 \Sigma_2 &:=\left\{ z_0\in\Omega_T\> :\> \limsup_{\rho\searrow 0}\lvert (Du)_{z_0,\rho}\rvert =\infty\right\} .
\end{align*}
\end{thm}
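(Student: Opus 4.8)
The plan is to follow the now-standard blueprint of partial regularity via $\A$-caloric approximation, adapting the elliptic VMO-argument of \cite{Kana} to the parabolic setting along the lines of \cite{DM,DMS,BDHS}. The first step is to establish a \emph{Caccioppoli inequality} on intrinsic parabolic cylinders $Q_\rho(z_0)$: testing the weak formulation \eqref{ws} with $\varphi=\eta^2(u-\ell)$ for an affine-in-$x$ map $\ell$ and a cut-off $\eta$, using ellipticity ({\bf H2}) to absorb the leading term, the growth bound ({\bf H1}) on $A$, the $u$-continuity ({\bf H3}) and the growth ({\bf H5}) of $H$ (here the smallness condition $2^{(10-9p)/2}\lambda>a(M)M$ together with $\|u\|_\infty\le M$ is what lets us absorb the inhomogeneity), and the Gagliardo--Nirenberg / Sobolev interpolation in the parabolic scaling. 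This yields control of $\dashint_{Q_{\rho/2}}|Du-(Du)_{z_0,\rho}|^p$ by $\dashint_{Q_\rho}|W(u/\rho)|^p$-type excess plus error terms carrying $\omega$ and the VMO-modulus $V$.

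The second and central step is the \emph{linearization and excess-decay estimate}. On a cylinder where the excess functional
\[
 \Phi(z_0,\rho):=\dashint_{Q_\rho(z_0)}\bigl|Du-(Du)_{z_0,\rho}\bigr|^2+\bigl|Du-(Du)_{z_0,\rho}\bigr|^p\,dz
\]
is small and $|(Du)_{z_0,\rho}|$ is bounded, I freeze the coefficients: replace $A(z,u,Du)$ by the constant bilinear form $\A:=\partial_w A\bigl(z_0,(u)_{z_0,\rho},(Du)_{z_0,\rho}\bigr)$, which is strongly elliptic and bounded by ({\bf H1})--({\bf H2}). The error from this replacement splits into (i) a $w$-oscillation term controlled by the modulus $\mu$ of ({\bf H1}) composed with the excess, (ii) a $u$-oscillation term controlled by $\omega$ applied to $\dashint |u-(u)_{z_0,\rho}|^2$, which by Poincaré in the parabolic cylinder is itself controlled by $\rho^2\Phi+(\text{a priori bound})$, and (iii) the VMO-term $\dashint_{Q_\rho} V_{z_0}(z,\rho)\,dz\le V(\rho)$, which is the genuinely new ingredient and is $o(1)$ as $\rho\searrow0$ by ({\bf H4}). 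One then verifies that the rescaled functions are approximately $\A$-caloric in the sense of the hypotheses of Lemma \ref{A-caloric}; applying the $\A$-caloric approximation lemma produces an exact $\A$-caloric $h$ close to $u$, and the classical a priori estimates for solutions of linear parabolic systems with constant coefficients give, for any $\theta\in(0,1/4)$,
\[
 \Phi(z_0,\theta\rho)\le C\,\theta^{2}\,\Phi(z_0,\rho)+C\,\theta^{-(n+2)}\Bigl[\mu\bigl(\Phi(z_0,\rho)\bigr)^{2}+\omega\bigl(\text{osc}\bigr)+V(\rho)^{2}\Bigr].
\]

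The third step is the \emph{iteration}: choosing $\theta$ small so that $C\theta^{2}\le\tfrac12\theta^{2\alpha}$ for a fixed $\alpha\in(0,1)$, and then choosing the smallness thresholds for $\Phi(z_0,\rho_0)$ and $|(Du)_{z_0,\rho_0}|$, a standard iteration lemma (Lemma of the type in \cite{DMS}) shows that the smallness propagates down all scales and $\Phi(z_0,\theta^k\rho_0)\lesssim \theta^{2\alpha k}+\sum_{j} \theta^{2\alpha(k-j)}V(\theta^j\rho_0)^2$; since $V(\rho)\to0$, the Cesàro-type sum is $o(\theta^{\cdots})$ but—and this is the delicacy of the VMO case—it decays only like a slowly varying function, so one obtains $\Phi(z_0,r)\le C r^{2\alpha}$ only after absorbing the $V$-tail, which forces us to accept an arbitrary $\alpha<1$ rather than a fixed Hölder exponent tied to $\beta$ (exactly as in \cite{BDHS,Kana}). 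By the characterization of Campanato, this gives $u\in C^{\alpha,\alpha/2}$ near $z_0$. Finally, the regular set $\Omega_u$ is defined as the set of $z_0$ where both smallness conditions eventually hold; its complement is contained in $\Sigma_1\cup\Sigma_2$ by construction, and $\mathcal H^{n+2}_{\mathrm{par}}(\Sigma_1\cup\Sigma_2)=0$ follows from the Lebesgue-type differentiation theorem for the parabolic metric applied to $Du\in L^p$ (for $\Sigma_1$) and to the boundedness of averages (for $\Sigma_2$).

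The main obstacle I expect is controlling the VMO-error through the iteration: unlike the Hölder case, $V(\rho)$ carries no power-rate, so the naive geometric iteration does not close, and one must run the Campanato-type iteration carefully—either by a dyadic summation argument exploiting only $V(\rho)\to 0$ and $\sum_k V(\theta^k\rho_0)<\infty$ is \emph{not} available, so instead one proves decay of $\Phi$ below \emph{every} rate $r^{2\alpha}$ with constants depending on how fast $V$ vanishes—together with the correct handling of the $u$-dependence, where $\omega$ must be evaluated at a quantity that is itself only controlled a posteriori, requiring a simultaneous (coupled) smallness bootstrap for the excess and the mean oscillation of $u$. Verifying that the approximate $\A$-caloricity hypotheses of Lemma \ref{A-caloric} hold with the right scaling of the defect—in particular that the time-derivative term $\langle u,\varphi_t\rangle$ contributes an error of the correct order after the parabolic rescaling $\rho\mapsto1$—is the other point that needs care, but it is by now routine in this circle of ideas.
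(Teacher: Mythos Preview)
Your overall architecture---Caccioppoli inequality, linearization, approximate $\A$-caloricity, application of Lemma~\ref{A-caloric}, iteration, and Campanato---matches the paper. The genuine gap is in the iteration. You attempt to prove a \emph{decay} estimate for the gradient excess,
\[
\Phi(z_0,\theta^k\rho_0)\lesssim \theta^{2\alpha k}+\sum_{j=0}^{k-1} \theta^{2\alpha(k-j)}V(\theta^j\rho_0)^2,
\]
and then assert $\Phi(z_0,r)\le C r^{2\alpha}$ ``after absorbing the $V$-tail.'' This does not close: since $V(\rho)\to0$ carries no rate, the sum on the right is only $o(1)$ as $k\to\infty$, not $O(\theta^{2\alpha k})$, and no choice of $\alpha<1$ rescues it (take e.g.\ $V(\rho)=1/\log(1/\rho)$). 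A power decay of the gradient-type excess is simply not available under a bare VMO hypothesis; it would force $Du\in C^{\alpha,\alpha/2}$, which is more than the theorem claims and false in general.

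The paper (following \cite{BDHS,Kana}) sidesteps this by \emph{not} iterating for decay of the excess at all. It works with the function excess $\Psi(\rho)=\Psi(z_0,\rho,\ell_{z_0,\rho})$ relative to the minimizing affine map, the augmented quantity $\Psi_*(\rho)=\Psi(\rho)+\omega(\,\cdot\,)+V(\rho)+(a^q|D\ell|^q+b^q)\rho^q$, and the Campanato excess $C_\alpha(\rho)=\rho^{-2\alpha}\dashint_{Q_\rho(z_0)}|u-u_{z_0,\rho}|^2\,dz$. The one-step improvement (Lemma~\ref{Ex-imp}) gives $\Psi(\theta\rho)\le C_3\theta^2\Psi_*(\rho)$. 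The induction (Lemma~\ref{excess-imp}) is then the \emph{coupled} statement
\[
(A_k):\qquad \Psi(\theta_*^k\rho)<\varepsilon_*\quad\text{and}\quad C_\alpha(\theta_*^k\rho)<\kappa_*,
\]
with thresholds fixed so that $\omega(\kappa_*)<\varepsilon_*$, $V(\rho_*)<\varepsilon_*$, $\{(a\sqrt{n(n+2)\kappa_*})^q+b^q\}\rho_*^{q\alpha}<\varepsilon_*$, $\rho_*\le\kappa_*^{1/(2-2\alpha)}$ and $4C_3\theta_*^2\le1$. Under $(A_k)$ one gets $\Psi_*(\theta_*^k\rho)<4\varepsilon_*$, whence $\Psi(\theta_*^{k+1}\rho)<\varepsilon_*$; independently, the bound $|D\ell_{z_0,\theta_*^k\rho}|^2\le n(n+2)(\theta_*^k\rho)^{2-2\alpha}C_\alpha(\theta_*^k\rho)$ from Lemma~\ref{affine-esti} feeds back into $C_\alpha(\theta_*^{k+1}\rho)<\kappa_*$. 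Thus $\Psi$ merely \emph{stays small}---no decay is ever asserted---while the uniform bound on $C_\alpha$ along the whole dyadic sequence is exactly what the Campanato--Da~Prato characterization needs to conclude $u\in C^{\alpha,\alpha/2}$ near $z_0$. The ``simultaneous coupled smallness bootstrap'' you flag in your last paragraph is not a side difficulty to be overcome but the actual mechanism of the proof; your main narrative should be rewritten around $(A_k)$, and the attempt to extract $\Phi(r)\le Cr^{2\alpha}$ dropped.
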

The previous result means that the weak solution $u$ is H\"older continuous in $\Omega_u$ with exponent $\alpha$ with respect to 
the parabolic metric given in \eqref{para-metric}. In other word, $u$ is H\"older continuous in $\Omega_u$ with exponent $\alpha$ 
with respect to space variable $x$ and with exponent $\alpha/2$ with respect to the time variable $t$. 

\section{Preliminaries}

In this section we present the $\A$-caloric approximation lemma and some standard estimates for the proof of our main theorem, 
(Theorem \ref{pr}). 

First we state the definition of $\A$-caloric function and recall the $\A$-caloric approximation lemma as below. 

\begin{dfn}[{$\A$-caloric function, \cite[DEFINITION 3.1]{DMS}}]
Let $\A$ be a bilinear form with constant coefficients satisfying 
\begin{equation}
 \lambda \lvert \tilde{w}\rvert^2\leq \A (\tilde{w},\tilde{w}), \qquad \A (w,\tilde{w})\leq L\lvert w\rvert\lvert\tilde{w}\rvert 
 \qquad \text{for all}\ w,\tilde{w}\in\Hom(\R^n,\R^N). \label{bil-property}
\end{equation}
A function $h\in L^2(t_0-\rho^2,t_0;W^{1,2}(B_\rho(x_0),\R^N))$ is called $\A$-caloric in the cylinder $Q_\rho(z_0)$ iff it satisfies 
\[
 \int_{Q_\rho(z_0)}\Bigl( \langle h,\varphi_t\rangle-\A(Dh,D\varphi)\Bigr) dz=0 
 \qquad \text{for all}\ \varphi\in C^\infty_0(Q_\rho(z_0),\R^N). 
\]
\end{dfn}
 
\begin{lem}[{$\A$-caloric approximation lemma, \cite[LEMMA 3.2]{DMS}}]\label{A-caloric}
Given $\varepsilon>0$, $0<\lambda<L$ and $p\geq 2$ there exists $\delta=\delta(n,N,p,\lambda,L,\varepsilon)\geq 1$ with the following 
property: Whenever $\A$ is a bilinear form on $\R^{nN}$ satisfying \eqref{bil-property}, $\gamma\in (0,1]$, and whenever 
\[
 w\in L^p(t_0-(\rho/2)^2,t_0; W^{1,2}(B_{\rho/2}(x_0),\R^N))
\]
is a function satisfying 
\begin{equation}
 \dashint_{Q_{\rho/2}(z_0)}\left(\left\lvert\frac{w}{\rho/2}\right\rvert^2 +\gamma^{p-2}\left\lvert\frac{w}{\rho/2}\right\rvert^p\right) dz 
 +\dashint_{Q_{\rho/2}(z_0)}\Bigl(\lvert Dw\rvert^2+\gamma^{p-2}\lvert Dw\rvert^p\Bigr) dz \leq 1
\end{equation}
and 
\begin{equation}
 \left\lvert\int_{Q_{\rho/2}(z_0)}\Bigl( \langle w,\varphi_t\rangle -\A(Dw,D\varphi) \Bigr) dz\right\rvert 
 \leq \delta \sup_{Q_{\rho/2}(z_0)}\lvert D\varphi\rvert
\end{equation}
for every $\varphi\in C^\infty_0(Q_{\rho/2}(z_0),\R^N)$ then there exists a function 
\[
 h\in L^p(t_0-(\rho/4)^2,t_0; W^{1,2}(B_{\rho/4}(x_0),\R^N))
\]
which is $\A$-caloric on $Q_{\rho/4}(z_0)$ such that 
\begin{equation}
 \dashint_{Q_{\rho/4}(z_0)}\left(\left\lvert\frac{h}{\rho/4}\right\rvert^2+\gamma^{p-2}\left\lvert\frac{h}{\rho/4}\right\rvert^p\right) dz 
 +\dashint_{Q_{\rho/4}(z_0)}\Bigl(\lvert Dh\rvert^2+\gamma^{p-2}\lvert Dh\rvert^p\Bigr) dz\leq 2\cdot 2^{n+2+2p}
\end{equation}
and 
\begin{equation}
 \dashint_{Q_{\rho/4}(z_0)}\left(\left\lvert\frac{w-h}{\rho/4}\right\rvert^2+\gamma^{p-2}\left\lvert\frac{w-h}{\rho/4}\right\rvert^p\right) dz 
 \leq \varepsilon.
\end{equation}
\end{lem}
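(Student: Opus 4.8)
\textbf{Proof strategy for Lemma \ref{A-caloric} ($\A$-caloric approximation).}

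The plan is to argue by contradiction and compactness, following the scheme introduced by Duzaar--Grotowski in the elliptic ($\A$-harmonic) setting and adapted to the parabolic case in \cite{DM,DMS}. Fix $\varepsilon>0$, $\lambda$, $L$, $p$ and suppose the conclusion fails. Then for every $k\in\N$ (playing the role of $1/\delta$) there exist a bilinear form $\A_k$ satisfying \eqref{bil-property}, a scale that we may normalize to $\rho=2$ and a center that we may translate to $z_0=0$ (so the relevant cylinders are $Q_1(0)$ and $Q_{1/2}(0)$), a parameter $\gamma_k\in(0,1]$, and a function $w_k\in L^p(-1,0;W^{1,2}(B_1,\R^N))$ satisfying the normalization
\[
 \dashint_{Q_1}\bigl(\lvert w_k\rvert^2+\gamma_k^{p-2}\lvert w_k\rvert^p\bigr)\,dz
 +\dashint_{Q_1}\bigl(\lvert Dw_k\rvert^2+\gamma_k^{p-2}\lvert Dw_k\rvert^p\bigr)\,dz\leq 1
\]
and the approximate $\A_k$-caloricity bound
\[
 \left\lvert\int_{Q_1}\bigl(\langle w_k,\varphi_t\rangle-\A_k(Dw_k,D\varphi)\bigr)\,dz\right\rvert
 \leq \tfrac1k\sup_{Q_1}\lvert D\varphi\rvert
 \qquad\text{for all }\varphi\in C^\infty_0(Q_1,\R^N),
\]
but such that \emph{no} $\A_k$-caloric comparison function $h$ on $Q_{1/2}$ satisfies the energy bound and the $L^2$-plus-$L^p$-closeness $\dashint_{Q_{1/2}}(\lvert w_k-h\rvert^2+\gamma_k^{p-2}\lvert w_k-h\rvert^p)\,dz\le\varepsilon$. (The dyadic radii in the statement, $\rho/2$ and $\rho/4$, translate here to $1$ and $1/2$ after normalization; the passage to interior subcylinders is what gives room for the compactness arguments below.)

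Next I would extract convergent subsequences. The space of bilinear forms obeying \eqref{bil-property} is compact, so $\A_k\to\A$ with $\A$ still satisfying \eqref{bil-property}. The $L^2(Q_1)$ and (spatial) $W^{1,2}(Q_1)$ bounds coming from the normalization (the $\gamma_k^{p-2}$-weighted $L^p$ terms only help) give $w_k\rightharpoonup w$ weakly in $L^2(-1,0;W^{1,2}(B_1,\R^N))$; moreover $\partial_t w_k\to 0$ in the sense of distributions paired against divergence-form test functions, by the approximate caloricity inequality with test functions of bounded gradient. Passing to the limit in the weak formulation shows that the limit $w$ is genuinely $\A$-caloric on $Q_1$ (hence on $Q_{1/2}$), and lower semicontinuity of the norms under weak convergence preserves the normalization, so the natural candidate for the comparison map would be $h:=w$ — provided we can upgrade the weak convergence $w_k\rightharpoonup w$ to strong convergence in $L^2_{\mathrm{loc}}$ (and in the weighted $L^p_{\mathrm{loc}}$), which is exactly the statement that $\dashint_{Q_{1/2}}(\lvert w_k-w\rvert^2+\gamma_k^{p-2}\lvert w_k-w\rvert^p)\,dz\to 0$, contradicting the choice of the $w_k$. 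For the energy bound $\dashint_{Q_{1/2}}(\lvert h/(1/2)\rvert^2+\dots)\le 2\cdot 2^{n+2+2p}$ one uses Caccioppoli-type estimates for $\A$-caloric functions together with weak lower semicontinuity; a minor technical point is that one may have to replace $w$ by a mollification or by $w$ on a slightly smaller cylinder to be safe about the precise constant, and that the scale comparison between $Q_1$ and $Q_{1/2}$ contributes the explicit power of $2$.

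The main obstacle is precisely this strong $L^2$-compactness: because $\partial_t w_k$ is controlled only very weakly (as a functional on $C^\infty_0$ with the $\sup\lvert D\varphi\rvert$ norm, not in any negative Sobolev space of integrability order matched to the spatial gradient bound), the classical Aubin--Lions lemma does not apply directly. The standard remedy, which I would follow, is a parabolic approximation/regularization argument: mollify $w_k$ in time (Steklov averages or convolution) to get functions whose time derivatives are controlled, establish strong convergence for the mollified sequence via Aubin--Lions on interior cylinders, and then control the mollification error uniformly in $k$ using the approximate caloricity inequality (this is where the hypothesis that the functional is bounded by $\sup\lvert D\varphi\rvert$ is used efficiently, since difference quotients in time of $w_k$ can be tested against such $\varphi$). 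One also has to handle the $\gamma_k$-weighted $L^p$ term: after passing to a further subsequence either $\gamma_k\to\gamma>0$, in which case the $L^p$ bound is genuine and one argues as in the $L^2$ case with $p$ in place of $2$, or $\gamma_k\to 0$, in which case $\gamma_k^{p-2}\lvert w_k-w\rvert^p\to 0$ follows once one has the $L^2$ convergence together with the uniform (unweighted) $L^2+$weighted $L^p$ bound, by an interpolation/Hölder argument. Assembling these pieces yields the desired contradiction and hence the lemma; I would present the details of the time-regularization compactness step carefully, as it is the crux, and treat the Caccioppoli estimate and the interpolation bookkeeping as routine.
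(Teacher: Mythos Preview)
The paper does not prove this lemma; it is quoted verbatim from \cite[Lemma~3.2]{DMS} and used as a black box, so there is no ``paper's own proof'' to compare against. Your compactness-by-contradiction scheme is exactly the strategy used in \cite{DM,DMS}, and your identification of the strong $L^2_{\mathrm{loc}}$ compactness as the crux (and of Steklov averaging as the cure for the weak time-derivative control) is correct.

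There is, however, one genuine slip in your outline. You propose $h:=w$, the weak limit, as the comparison map. But the contradiction hypothesis says that for each $k$ there is no \emph{$\A_k$-caloric} function close to $w_k$, whereas your $w$ is $\A$-caloric (for the limit form $\A$), not $\A_k$-caloric. Taking $h=w$ therefore does not contradict anything. The standard fix is to let $h_k$ be the $\A_k$-caloric function on $Q_{1/2}$ with the same lateral and initial data as $w$ (or as $w_k$); continuous dependence of solutions to linear constant-coefficient parabolic systems on the coefficients gives $h_k\to w$ strongly in $L^2(Q_{1/2})$ (and the energy bound for $h_k$ follows from the linear Caccioppoli inequality uniformly in $k$). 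Combining this with the strong convergence $w_k\to w$ you already argued yields $\dashint_{Q_{1/2}}\lvert w_k-h_k\rvert^2\,dz\to 0$, and the weighted $L^p$ term is handled by your $\gamma_k\to\gamma>0$ versus $\gamma_k\to 0$ dichotomy. With this correction your sketch matches the argument in \cite{DMS}.
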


The next lemma features a standard estimate for $\A$-caloric functions. 
 
\begin{lem}[{\cite[LEMMA 4.7]{DMS}}]\label{A-caloric-apriori}
Let $h\in L^2(t_0-(\rho/4)^2,t_0;W^{1,2}(B_{\rho/4}(x_0),\R^N))$ be $\A$-caloric function in $Q_{\rho/4}(z_0)$ with $\A$ satisfying 
\eqref{bil-property}. Then $h$ is smooth in $B_{\rho/4}(x_0)\times (t_0-(\rho/4)^2,t_0]$ and for any $s\geq 1$ there exists a 
constant $c_2=c_2(n,N,L/\lambda,s)\geq 1$ such that for any affine function $\ell :\R^n\to\R^N$ there holds 
\[
 \dashint_{Q_{\theta\rho}(z_0)}\left\lvert\frac{h-\ell}{\theta\rho}\right\rvert^s dz 
 \leq c_2\theta^2\dashint_{Q_\rho/4(z_0)}\left\lvert\frac{h-\ell}{\rho/4}\right\rvert^s dz 
 \qquad \text{for every}\ 0<\theta \leq 1/4. 
\]
\end{lem}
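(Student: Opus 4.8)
The plan is to prove the estimate by combining interior regularity for $\A$-caloric functions with a rescaling argument that reduces the general affine function $\ell$ to the subtraction of a constant, and then to the case $\ell=0$ after exploiting that $\A$-caloric functions are preserved under subtraction of affine maps of the form $x\mapsto \xi + \eta x$ only up to a correction. More precisely, first I would recall that a function $h$ which is $\A$-caloric in $Q_{\rho/4}(z_0)$ with $\A$ satisfying \eqref{bil-property} is, by standard parabolic regularity theory for linear systems with constant coefficients (difference-quotient / energy methods, as in \cite{DMS}), smooth up to the top of the cylinder $B_{\rho/4}(x_0)\times(t_0-(\rho/4)^2,t_0]$; in particular all spatial derivatives $D^k h$ and time derivatives $\partial_t^m h$ exist and are locally bounded there, with bounds controlled by the $L^2$-norm of $h$ on the larger cylinder via the Caccioppoli-type inequalities for $\A$-caloric functions.

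Next I would fix an affine function $\ell(x)=\ell_0+B(x-x_0)$ and set $g:=h-\ell$. The key observation is that $g$ need not itself be $\A$-caloric, but the second-order spatial derivatives of $g$ coincide with those of $h$ (since $\ell$ is affine), and $\partial_t g=\partial_t h$; hence $g$ solves $\partial_t g-\mathrm{div}(\A Dg)=-\mathrm{div}(\A B)=0$ because $\A B$ is constant in $x$. So in fact $g$ \emph{is} $\A$-caloric in $Q_{\rho/4}(z_0)$ as well. Therefore it suffices to prove, for any $\A$-caloric function $g$ on $Q_{\rho/4}(z_0)$ and any $s\geq 1$, the decay
\[
 \dashint_{Q_{\theta\rho}(z_0)}\left\lvert\frac{g}{\theta\rho}\right\rvert^s dz
 \leq c_2\,\theta^2\dashint_{Q_{\rho/4}(z_0)}\left\lvert\frac{g}{\rho/4}\right\rvert^s dz,
 \qquad 0<\theta\leq 1/4 .
\]
To get this I would use that $g$ minus its value $g(z_0)$ at the (regular) top-center point satisfies, by the mean value property / Taylor expansion for solutions of the constant-coefficient system, the pointwise bound $\lvert g(z)-g(z_0)\rvert\le c\,d_{\mathrm{par}}(z,z_0)\,\sup_{Q_{\rho/8}(z_0)}(\lvert Dg\rvert+\rho\lvert\partial_t g\rvert)$ on $Q_{\rho/8}(z_0)$, together with the interior a priori estimate $\sup_{Q_{\rho/8}(z_0)}(\lvert Dg\rvert+\rho\lvert\partial_t g\rvert)^s\le c\,\dashint_{Q_{\rho/4}(z_0)}\lvert g/(\rho/4)\rvert^s\,dz$, which follows from smoothness and scaling. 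Actually, to be careful with the affine correction I would instead apply this with $\ell$ chosen to be the first-order spatial Taylor polynomial of $h$ at $z_0$, absorbing the general $\ell$ by writing $h-\ell=(h-\ell^\ast)+(\ell^\ast-\ell)$ where $\ell^\ast$ is that Taylor polynomial; the second term is affine, so its $Q_{\theta\rho}$-average of $\lvert\cdot/(\theta\rho)\rvert^s$ scales like $\theta^s\leq\theta^2$ times a quantity bounded by the $Q_{\rho/4}$-average of $\lvert(h-\ell)/(\rho/4)\rvert^s$ up to $\lvert(h-\ell^\ast)/(\rho/4)\rvert^s$, and these are reconciled by the triangle inequality. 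Combining the pointwise decay of $h-\ell^\ast$ of order $d_{\mathrm{par}}^2$ near $z_0$ (this is where the power $\theta^2$ rather than $\theta^s$ originates: the first-order Taylor remainder together with the parabolic scaling of $\partial_t$ gives a genuinely quadratic decay in the parabolic metric) with the interior estimates yields the claim with a constant $c_2$ depending only on $n,N,L/\lambda,s$.

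The main obstacle I expect is the bookkeeping around the exponent: one must verify that the remainder $h-\ell^\ast$ decays like $(\theta\rho)^2$ and not merely like $(\theta\rho)^1$, which requires using both the spatial second-order Taylor estimate \emph{and} the parabolic equation to control $\partial_t h$ at the same order — in parabolic scaling $t\sim x^2$, so one time-derivative counts as two spatial derivatives, and the linear equation converts $\partial_t h$ into $\mathrm{div}(\A Dh)$, i.e. into second spatial derivatives, keeping everything at order two. A secondary technical point is passing from $s=2$ (where the a priori $L^2$-Caccioppoli bounds are immediate) to general $s\geq 1$; this is handled by the local boundedness of all derivatives of $g$ together with Hölder's inequality on the fixed cylinder $Q_{\rho/4}(z_0)$, so that $L^s$-averages are controlled by $L^2$-averages up to dimensional constants, all of which get absorbed into $c_2=c_2(n,N,L/\lambda,s)$.
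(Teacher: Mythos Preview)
The paper does not prove this lemma at all; it is only stated with a citation to \cite[Lemma~4.7]{DMS} and then invoked as a black box in the proof of Lemma~\ref{Ex-imp}. There is therefore no proof in the paper to compare your proposal against.

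A word of caution on your sketch nonetheless. Your reduction $g:=h-\ell$ is correct (and $g$ is indeed $\A$-caloric), but the displayed decay you then claim,
\[
 \dashint_{Q_{\theta\rho}(z_0)}\left\lvert\frac{g}{\theta\rho}\right\rvert^s dz
 \le c_2\,\theta^2\dashint_{Q_{\rho/4}(z_0)}\left\lvert\frac{g}{\rho/4}\right\rvert^s dz
 \quad\text{for every $\A$-caloric }g,
\]
is false: take $g\equiv 1$. Your subsequent fix via the first-order Taylor polynomial $\ell^\ast$ of $h$ at $z_0$ does not close the gap either: in the splitting $h-\ell=(h-\ell^\ast)+(\ell^\ast-\ell)$ the first piece does decay quadratically in the parabolic metric, but $\ell^\ast-\ell$ is an affine function with, in general, a nonzero constant part, so $\dashint_{Q_{\theta\rho}}\lvert(\ell^\ast-\ell)/(\theta\rho)\rvert^s\,dz$ blows up like $(\theta\rho)^{-s}$ rather than decaying. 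This is not so much a flaw in your method as in the quantifier ``for any affine function $\ell$'' in the lemma as recorded here; in the paper's sole application (proof of Lemma~\ref{Ex-imp}) the estimate is used only with the specific choice $\ell(x)=h_{z_0,\rho/4}+(Dh)_{z_0,\rho/4}(x-x_0)$, and for that choice your interior-regularity-plus-Taylor-expansion mechanism is exactly the standard route to the quadratic excess decay.
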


For given $u\in L^2(Q_\rho(z_0),\R^N)$ we denote by $\ell_{z_0,\rho}$ the unique affine function minimizing 
\begin{equation}
 \ell\mapsto\dashint_{Q_\rho(z_0)}\lvert u-\ell\rvert^2 dz \label{affine}
\end{equation}
among all affine functions $\ell(z)=\ell(x)$ which are independent of $t$. An elementary calculation yield that $\ell_{z_0,\rho}$ 
takes the form 
\[
 \ell_{z_0,\rho}(x)=\ell_{z_0,\rho}(x_0)+D\ell_{z_0,\rho}(x-x_0), 
\]
where 
\[
 \ell_{z_0,\rho}(x_0)=u_{z_0,\rho},\quad\text{and}\quad D\ell_{z_0,\rho}=\frac{n+2}{\rho^2}\dashint_{Q_\rho(z_0)}u\otimes (x-x_0)dz. 
\]
Using the Cauchy-Schwarz inequality we have the following lemma. 

\begin{lem}[{\cite[LEMMA 2.1]{DMS}}]\label{affine-esti}
Let $u\in L^2(Q_\rho(z_0),\R^N)$, $0<\theta <1$ and 
\[
 \ell_{z_0,\rho}(x)= \xi_{z_0,\rho}+D\ell_{z_0,\rho}(x-x_0), \qquad 
 \ell_{z_0,\theta\rho}(x)= \xi_{z_0,\theta\rho}+D\ell_{z_0,\theta\rho}(x-x_0)
\]
be the unique affine function that minimize 
\[
 \ell \mapsto \dashint_{Q_\rho(z_0)}\lvert u-\ell\lvert^2 dz \quad \text{and}\quad 
 \ell \mapsto \dashint_{Q_\theta\rho(z_0)}\lvert u-\ell\lvert^2 dz
\]
among all affine functions $\ell(z)=\ell(x)$ which are independent of $t$, respectively. Then there holds 
\begin{equation} 
 \lvert D\ell_{z_0,\theta\rho}-D\ell_{z_0,\rho}\rvert^2 
 \leq \frac{n(n+2)}{(\theta\rho)^2}\dashint_{Q_{\theta\rho}(z_0)}\lvert u-\ell_{z_0,\rho}\rvert^2 dz. \label{affine-esti1}
\end{equation}
Moreover, for any $D\ell\in \Hom(\R^n,\R^N)$ we have 
\begin{equation} 
 \lvert D\ell_{z_0,\rho}-D\ell\rvert^2 
 \leq \frac{n(n+2)}{\rho^2}\dashint_{Q_{\rho}(z_0)}\lvert u-u_{z_0,\rho}-D\ell (x-x_0)\rvert^2 dz. \label{affine-esti2}
\end{equation}
\end{lem}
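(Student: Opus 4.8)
The plan is to establish \eqref{affine-esti2} first by an explicit computation with the moments of a ball, and then to deduce \eqref{affine-esti1} from it by a one-line minimality observation. This is an elementary argument; I do not expect a genuine obstacle, only careful bookkeeping.

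First I would record the second-moment identity for the ball: by radial symmetry $\dashint_{B_\rho(x_0)}(x-x_0)_i(x-x_0)_j\,dx=0$ for $i\ne j$, while for $i=j$ it equals $\tfrac1n\dashint_{B_\rho(x_0)}\lvert x-x_0\rvert^2\,dx=\tfrac{\rho^2}{n+2}$; since the integrands depend on $x$ only, the same identities hold with $B_\rho(x_0)$ replaced by $Q_\rho(z_0)$, i.e. $\tfrac{n+2}{\rho^2}\dashint_{Q_\rho(z_0)}(x-x_0)\otimes(x-x_0)\,dz$ is the identity matrix. Applying this to the affine map $x\mapsto u_{z_0,\rho}+D\ell(x-x_0)$ (whose constant part integrates against $(x-x_0)$ to zero, again by symmetry) gives $\tfrac{n+2}{\rho^2}\dashint_{Q_\rho(z_0)}\bigl(u_{z_0,\rho}+D\ell(x-x_0)\bigr)\otimes(x-x_0)\,dz=D\ell$. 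Subtracting this from the formula $D\ell_{z_0,\rho}=\tfrac{n+2}{\rho^2}\dashint_{Q_\rho(z_0)}u\otimes(x-x_0)\,dz$ recalled just before the lemma yields the representation
\[
 D\ell_{z_0,\rho}-D\ell=\frac{n+2}{\rho^2}\dashint_{Q_\rho(z_0)}\bigl(u-u_{z_0,\rho}-D\ell(x-x_0)\bigr)\otimes(x-x_0)\,dz .
\]
Then \eqref{affine-esti2} follows by estimating the Frobenius norm entry by entry with Cauchy--Schwarz: each of the $nN$ entries is bounded by $\tfrac{n+2}{\rho^2}\bigl(\dashint_{Q_\rho(z_0)}\lvert u-u_{z_0,\rho}-D\ell(x-x_0)\rvert^2\,dz\bigr)^{1/2}\bigl(\dashint_{Q_\rho(z_0)}(x-x_0)_j^2\,dz\bigr)^{1/2}$, and summing the squares while inserting $\dashint_{Q_\rho(z_0)}(x-x_0)_j^2\,dz=\rho^2/(n+2)$ produces exactly the factor $n(n+2)/\rho^2$.

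For \eqref{affine-esti1} I would apply \eqref{affine-esti2} on the smaller cylinder $Q_{\theta\rho}(z_0)$ with the fixed matrix $D\ell=D\ell_{z_0,\rho}$, obtaining
\[
 \lvert D\ell_{z_0,\theta\rho}-D\ell_{z_0,\rho}\rvert^2\le\frac{n(n+2)}{(\theta\rho)^2}\dashint_{Q_{\theta\rho}(z_0)}\bigl\lvert u-u_{z_0,\theta\rho}-D\ell_{z_0,\rho}(x-x_0)\bigr\rvert^2\,dz .
\]
It then remains to replace $u_{z_0,\theta\rho}$ by $u_{z_0,\rho}$: for the function $x\mapsto u-D\ell_{z_0,\rho}(x-x_0)$ the best constant in $L^2(Q_{\theta\rho}(z_0))$ is its average over $Q_{\theta\rho}(z_0)$, which by the symmetry of $B_{\theta\rho}(x_0)$ equals $u_{z_0,\theta\rho}$; hence $\dashint_{Q_{\theta\rho}(z_0)}\lvert u-u_{z_0,\theta\rho}-D\ell_{z_0,\rho}(x-x_0)\rvert^2\,dz\le\dashint_{Q_{\theta\rho}(z_0)}\lvert u-u_{z_0,\rho}-D\ell_{z_0,\rho}(x-x_0)\rvert^2\,dz=\dashint_{Q_{\theta\rho}(z_0)}\lvert u-\ell_{z_0,\rho}\rvert^2\,dz$, the last equality because $\ell_{z_0,\rho}(x)=u_{z_0,\rho}+D\ell_{z_0,\rho}(x-x_0)$. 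Combining the last two displays gives \eqref{affine-esti1}. The only step requiring a moment's thought is this swap of averages — that passing to the coarser-scale constant $u_{z_0,\rho}$ can only enlarge the $L^2$ deviation — but it is immediate from the minimizing property of the mean.
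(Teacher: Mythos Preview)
Your argument is correct and is exactly the computation the paper has in mind: the paper does not write out a proof but merely states that the lemma follows ``using the Cauchy--Schwarz inequality'' and cites \cite[Lemma~2.1]{DMS}, relying on the explicit formula $D\ell_{z_0,\rho}=\tfrac{n+2}{\rho^2}\dashint_{Q_\rho(z_0)}u\otimes(x-x_0)\,dz$ recorded just before. Your derivation of \eqref{affine-esti2} via the second-moment identity and entrywise Cauchy--Schwarz, followed by the minimality-of-the-mean step to pass from $u_{z_0,\theta\rho}$ to $u_{z_0,\rho}$ in \eqref{affine-esti1}, is precisely the intended elementary proof.
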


Next two lemmas can also be obtained by elementary calculation. 

\begin{lem}[{\cite[Lemma 3.7]{Kana}}]\label{Young2}
Consider fixed $a,b\geq 0$, $p\geq 1$. Then for any $\varepsilon>0$, there exists
$K=K(p,\varepsilon)\geq 0$ satisfying
\begin{equation}
 (a+b)^p\leq (1+\varepsilon)a^p+Kb^p. \label{young2}
\end{equation}
\end{lem}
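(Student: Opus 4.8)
The plan is to prove the elementary Young-type inequality $(a+b)^p\le (1+\varepsilon)a^p+Kb^p$ by a straightforward case distinction based on the relative sizes of $a$ and $b$. First I would fix a threshold $\tau=\tau(p,\varepsilon)>0$, to be chosen at the end, and split into the two cases $b\le \tau a$ and $b>\tau a$. In the first case, when $b\le \tau a$, I would use the bound $(a+b)^p\le (1+\tau)^p a^p$ and choose $\tau$ small enough that $(1+\tau)^p\le 1+\varepsilon$; since $t\mapsto(1+t)^p$ is continuous at $t=0$ with value $1$, such a $\tau$ exists and depends only on $p$ and $\varepsilon$. In that case the inequality holds with any $K\ge 0$.

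In the complementary case $b>\tau a$, i.e.\ $a<b/\tau$, I would estimate $(a+b)^p\le (b/\tau+b)^p=(1+1/\tau)^p b^p$, so the inequality holds with $K=(1+1/\tau)^p$, which again depends only on $p$ and $\varepsilon$ through the previously fixed $\tau$. Combining the two cases and taking $K=(1+1/\tau)^p$ (which also works in the first case since the extra term $Kb^p$ is nonnegative), the claimed bound \eqref{young2} follows for all $a,b\ge 0$.

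Alternatively, one can avoid the case distinction entirely: by convexity of $s\mapsto s^p$ for $p\ge 1$, for any $\theta\in(0,1)$ one has $(a+b)^p=\bigl(\theta\cdot\tfrac{a}{\theta}+(1-\theta)\cdot\tfrac{b}{1-\theta}\bigr)^p\le \theta^{1-p}a^p+(1-\theta)^{1-p}b^p$; choosing $\theta$ close enough to $1$ that $\theta^{1-p}\le 1+\varepsilon$ and setting $K=(1-\theta)^{1-p}$ gives the result immediately. Either route is entirely routine; there is no substantive obstacle, the only point requiring a (trivial) argument being the continuity/monotonicity that lets one absorb the cross term into the factor $1+\varepsilon$.
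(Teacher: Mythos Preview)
Your proof is correct; both the case-distinction argument and the convexity argument are valid and standard. Note that the paper does not actually give its own proof of this lemma: it is stated with a citation to \cite[Lemma 3.7]{Kana} and the remark that it ``can also be obtained by elementary calculation.'' Your proposal supplies precisely such an elementary calculation, so there is nothing to compare against beyond confirming that your argument is sound, which it is.
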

\begin{lem}[{\cite[Lemma 2.1]{GMo}}]\label{GM}
For $\delta \geq 0$, and for all $a,b\in\R^k$ we have 
\begin{equation}
 4^{-(1+2\delta)}\leq
 \frac{\displaystyle\int_0^1(1+|sa+(1-s)b|^2)^{\delta/2}ds}{(1+|a|^2+|b-a|^2)^{\delta/2}}
 \leq 4^\delta . \label{GM2}
\end{equation}
\end{lem}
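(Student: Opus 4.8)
The plan is to prove the two bounds in \eqref{GM2} separately. Write $v(s):=sa+(1-s)b=a-(1-s)(a-b)$ for $s\in[0,1]$, so that $v$ parametrises the segment from $b$ (at $s=0$) to $a$ (at $s=1$), and abbreviate $M:=1+|a|^2+|b-a|^2$ and $P:=\max\{|a|,|b|\}$. For the upper bound, the triangle inequality gives $|v(s)|\le s|a|+(1-s)|b|\le P$ for every $s\in[0,1]$, hence $\int_0^1\bigl(1+|v(s)|^2\bigr)^{\delta/2}\,ds\le(1+P^2)^{\delta/2}$; and, according to whether $P=|a|$ or $P=|b|$, combining $1+|a|^2\le M$ with $|b|^2\le 2|a|^2+2|b-a|^2$ yields $1+P^2\le 2M$. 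Thus the integral is at most $2^{\delta/2}M^{\delta/2}\le 4^\delta M^{\delta/2}$, since $\delta/2\le 2\delta$ for $\delta\ge 0$, which is the right-hand inequality in \eqref{GM2}.

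For the left-hand inequality — the only part that needs an idea — I would first observe that if $a=b$ the ratio in \eqref{GM2} equals $1$ and there is nothing to prove, so I may assume $a\ne b$, whence $P>0$. Since the integral is invariant under the substitution $s\mapsto 1-s$, which interchanges the roles of $a$ and $b$, I may further assume $P=|a|$. The reverse triangle inequality then gives $|v(s)|\ge|a|-(1-s)|b-a|=P-(1-s)|b-a|$, so $|v(s)|\ge P/2$ on the set $S:=\{\,s\in[0,1]:1-s\le P/(2|b-a|)\,\}$, which is an interval abutting $s=1$ of length $\min\{1,P/(2|b-a|)\}$; because $|b-a|\le|a|+|b|\le 2P$, this length is at least $1/4$. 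On $S$ we have $1+|v(s)|^2\ge 1+P^2/4\ge\tfrac14(1+P^2)$, and integrating over $S$ alone gives
\[
 \int_0^1\bigl(1+|v(s)|^2\bigr)^{\delta/2}\,ds\ \ge\ \tfrac14\cdot 4^{-\delta/2}\bigl(1+P^2\bigr)^{\delta/2}.
\]
Finally $M=1+|a|^2+|b-a|^2\le 1+P^2+4P^2\le 5(1+P^2)$, so the right-hand side is at least $\tfrac14\cdot 20^{-\delta/2}M^{\delta/2}$; and since $20\le 16^2$ we have $20^{-\delta/2}\ge 16^{-\delta}$, hence the integral is at least $\tfrac14\cdot 16^{-\delta}M^{\delta/2}=4^{-(1+2\delta)}M^{\delta/2}$, as claimed. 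I would stress that this argument is uniform in $\delta\ge 0$: there is no need to split the convex range $\delta\ge 2$ (where a Jensen-type argument would also work) from the concave range $0\le\delta\le 2$.

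The single point requiring care is the measure estimate $|S|\ge 1/4$: it is exactly what prevents the integral from being small when $|a|$ and $|b-a|$ are large but $v(s)$ happens to dip close to $0$ in the interior of $[0,1]$, and it rests only on the crude bound $|b-a|\le 2P$ together with the reverse triangle inequality. Beyond this, the proof is mere bookkeeping of universal constants, and the slack in the exponents $-(1+2\delta)$ and $\delta$ on the two sides of \eqref{GM2} is comfortably larger than what the above estimates consume.
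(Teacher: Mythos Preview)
Your argument is correct: the upper bound follows immediately from $|v(s)|\le\max\{|a|,|b|\}$ together with $1+\max\{|a|,|b|\}^2\le 2M$, and for the lower bound your key observation that $|v(s)|\ge|a|/2$ on a subinterval of length at least $1/4$ (via $|b-a|\le 2P$) does exactly the required work, with the remaining constant-tracking landing safely inside $4^{-(1+2\delta)}$.

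There is, however, nothing to compare your approach against here: the paper does not prove this lemma at all. It is quoted as \cite[Lemma~2.1]{GMo} (Giaquinta--Modica) and stated without proof, immediately before the start of Section~4. So your self-contained argument supplies what the paper merely cites.
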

\section{Proof of the main theorem}
To prove the regularity result (Theorem \ref{pr}), we first prove Caccioppoli-type inequality. 
In the followings, we define $q>0$ as 
the dual exponent of $p\geq 2$, that is, $q=p/(p-1)$. Here we note that $q\leq 2$. 
\begin{lem}\label{Caccioppoli}
Let $u\in C^0_b(-T,0;L^2(\Omega,R^N))\cap L^p(-T,0;W^{1,p}(\Omega,\R^N))$ be a bounded weak solution of the parabolic system 
\eqref{system} under the structure condition {\rm ({\bf H1}),({\bf H2}),({\bf H3}),({\bf H4})} and {\rm ({\bf H5})} 
with satisfying $\| u\|_{\infty}\leq M$ and $2^{(10-9p)/2}\lambda>a(M)M$. 
For any $ z_0=(x_0,t_0)\in\Omega_T$ and $\rho\leq 1$ with $Q_{\rho}(z_0)\Subset\Omega_T$, 
and any affine functions $\ell:\R^n\to\R^N$ with $\lvert\ell(x_0)\rvert\leq M$, we have 
the estimate
\begin{align}
 \sup_{t_0-(\rho/2)^2<t<t_0}&\dashint_{B_{\rho/2}(x_0)}\frac{\lvert u-\ell\rvert^2}{\rho^2(1+\lvert D\ell\rvert)^2} dx
 +\dashint_{Q_{\rho/2}(z_0)}\left\{\frac{\lvert Du-D\ell\rvert^2}{(1+\lvert D\ell\rvert)^2}
 +\frac{\lvert Du-D\ell\rvert^p}{(1+\lvert D\ell\rvert)^p}\right\} dz \notag\\
 \leq C_1&\Bigg[ \dashint_{Q_\rho(z_0)}\left\{
 \frac{\lvert u-\ell\rvert^2}{\rho^2(1+\lvert D\ell\rvert)^2}+
 \frac{\lvert u-\ell\rvert^p}{\rho^p(1+\lvert D\ell\rvert)^p}\right\} dz \notag\\
 &+\omega \left(\dashint_{Q_\rho(z_0)}\lvert u-\ell(x_0)\rvert^2dz \right)
 +V(\rho)+\left( a^q\lvert D\ell\rvert^q+b^q \right) \rho^q \Bigg], \label{caccioppoli}
\end{align}
with the constant $C_1=C_1(\lambda,p,L,a(M),M)\geq 1$. 
\end{lem}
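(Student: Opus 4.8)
The plan is to follow the classical Caccioppoli scheme for parabolic systems, adapted to the VMO-setting as in the elliptic case \cite{Kana}. First I would freeze the coefficients: write $A_0(w) := (A(\cdot, \ell(x_0), w))_{z_0,\rho}$, the average over $Q_\rho(z_0)$ of $A$ with the $u$-slot frozen at the constant $\ell(x_0)$. Since $A_0$ has no $z$- or $u$-dependence and still satisfies the growth and ellipticity bounds of (H1)--(H2), the associated bilinear form behaves well. I would test the weak formulation \eqref{ws} with $\varphi = \eta^2 (u - \ell)$, where $\eta = \eta(x)\zeta(t)$ is a standard cutoff, Lipschitz, equal to $1$ on $Q_{\rho/2}(z_0)$, vanishing near the parabolic boundary of $Q_\rho(z_0)$, with $|D\eta| \lesssim 1/\rho$ and $|\partial_t \zeta| \lesssim 1/\rho^2$. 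Because $u$ is only $C^0_b$ in time with values in $L^2$, the term $\langle u, \varphi_t\rangle$ must be handled via the usual Steklov-average/mollification-in-time argument to justify the integration by parts producing $\tfrac12 \partial_t |u-\ell|^2$; this yields the supremum term on the left side of \eqref{caccioppoli} after taking sup over $t \in (t_0 - (\rho/2)^2, t_0)$.

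The core algebraic step is the ellipticity estimate for the principal part. I would write
\[
\langle A(z,u,Du) - A_0(D\ell), Du - D\ell\rangle
\]
and split it as $\langle A_0(Du) - A_0(D\ell), Du - D\ell\rangle$ plus the error $\langle A(z,u,Du) - A_0(Du), Du - D\ell\rangle$. For the first term, the monotonicity coming from (H2) — via the representation $A_0(Du) - A_0(D\ell) = \int_0^1 \partial_w A_0(D\ell + s(Du - D\ell))(Du - D\ell)\,ds$ combined with Lemma \ref{GM} applied with $\delta = p-2$ — gives a lower bound of order $\lambda\bigl(|Du - D\ell|^2(1+|D\ell|)^{p-2} + |Du-D\ell|^p\bigr)$, up to the universal constant $4^{-(1+2\delta)}$. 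Dividing through by $(1+|D\ell|)^{p-2}$ and noting $(1+|D\ell|)^{p-2} \le (1+|D\ell|)^p$ produces exactly the two gradient terms on the left of \eqref{caccioppoli}. The error term is controlled by (H4): $|A(z,u,Du) - A_0(Du)| \le |A(z,u,Du) - (A(\cdot,u,Du))_{z_0,\rho}| + |(A(\cdot,u,Du))_{z_0,\rho} - (A(\cdot,\ell(x_0),Du))_{z_0,\rho}|$, the first piece bounded by $V_{z_0}(z,\rho)(1+|Du|)^{p-1}$ and the second by $L\omega(|u - \ell(x_0)|^2)(1+|Du|)^{p-1}$ using (H3) under the average. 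After multiplying by the cutoff and integrating, the $V_{z_0}$-contribution averages to $\le c\,V(\rho)$ by the definition in (H4), and the $\omega$-term, using concavity and Jensen, is bounded by $c\,\omega\bigl(\dashint_{Q_\rho}|u-\ell(x_0)|^2\,dz\bigr)$ — this is where the two inhomogeneous terms in the right-hand side of \eqref{caccioppoli} come from.

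The remaining pieces are more routine. The terms involving $D\eta$ — namely $\langle A(z,u,Du) - A_0(D\ell), 2\eta D\eta \otimes (u-\ell)\rangle$ and the comparison term $\langle A_0(D\ell), 2\eta D\eta \otimes(u-\ell)\rangle$ (which vanishes, or is absorbed, since $A_0(D\ell)$ is constant and $\eta^2(u-\ell)$ has the right boundary behavior) — are estimated by Young's inequality (Lemma \ref{Young2}) with a small parameter $\varepsilon$, trading a small multiple of $\int \eta^2(|Du-D\ell|^2(1+|D\ell|)^{p-2} + |Du-D\ell|^p)$ against $c(\varepsilon)\int |D\eta|^2(\cdots)$, the latter giving the $|u-\ell|^2/\rho^2$ and $|u-\ell|^p/\rho^p$ terms after dividing by $(1+|D\ell|)^{p-2}$ and using $|D\eta|\lesssim 1/\rho$. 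The right-hand side $H$ is handled by (H5): $|H| \le a(M)|Du|^p + b$, and $|Du|^p \le c(|Du - D\ell|^p + |D\ell|^p)$, so $\int \eta^2 |H||u-\ell|$ splits into a term absorbable into the left (this is precisely where the smallness hypothesis $2^{(10-9p)/2}\lambda > a(M)M$ enters, together with $|u - \ell| \le |u| + |\ell(x_0)| + |D\ell||x-x_0| \le 2M + \rho|D\ell|$ on $Q_\rho$) and a remainder bounded by $c(a^q|D\ell|^q + b^q)\rho^q$ after an application of Young's inequality with exponents $p, q$. Finally, absorbing all the small-$\varepsilon$ terms into the left-hand side and relabeling constants gives \eqref{caccioppoli} with $C_1 = C_1(\lambda, p, L, a(M), M)$.

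\emph{Main obstacle.} The delicate point is the time-derivative term: since the solution is only continuous into $L^2$ and not differentiable in $t$, one cannot directly plug in the test function $\eta^2(u-\ell)$. I expect the technical heart of the argument to be the careful Steklov-averaging (or mollification) of \eqref{ws} in the time variable, passing to the limit, and keeping track of the $\sup_t$ term — this is standard in the parabolic literature (cf. \cite{DMS}) but must be done with some care to obtain the estimate with the correct structure, in particular to ensure the boundary contributions at $t = t_0 - (\rho/2)^2$ are nonnegative and the ones at $t = t_0$ survive as the supremum term.
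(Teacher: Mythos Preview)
Your overall strategy is right, and the handling of the $H$-term, the cross terms with the cutoff gradient, and the time derivative is essentially what the paper does. The genuine gap is in your decomposition of the principal part: by leaving the \emph{gradient} slot at $Du$ in the error term, you produce contributions that cannot be absorbed.

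Concretely, your error $\langle A(z,u,Du)-A_0(Du),\,Du-D\ell\rangle$ is bounded via (H3)--(H4) by $\bigl[V_{z_0}(z,\rho)+L\,\omega(|u-\ell(x_0)|^2)\bigr](1+|Du|)^{p-1}|Du-D\ell|$. Expanding $(1+|Du|)^{p-1}\le c\bigl[(1+|D\ell|)^{p-1}+|Du-D\ell|^{p-1}\bigr]$ and integrating against the cutoff, you pick up
\[
\int_{Q_\rho(z_0)}\eta^2\,V_{z_0}(z,\rho)\,|Du-D\ell|^p\,dz
\qquad\text{and}\qquad
\int_{Q_\rho(z_0)}\eta^2\,\omega\bigl(|u-\ell(x_0)|^2\bigr)\,|Du-D\ell|^p\,dz.
\]
Only the \emph{averages} of $V_{z_0}$ and $\omega$ are small; pointwise one has merely $V_{z_0}\le 2L$ and $\omega\le 1$, so these terms sit on the right-hand side with a coefficient of order $L$ rather than $\varepsilon$ and cannot be absorbed into the $\lambda$-term coming from ellipticity. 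Your sentence ``the $V_{z_0}$-contribution averages to $\le c\,V(\rho)$'' silently drops the weight $(1+|Du|)^{p-1}|Du-D\ell|$, which is exactly the obstruction.

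The paper avoids this by freezing the gradient at $D\ell$ in the error terms. It uses the \emph{un-averaged} $A(z,u,\cdot)$ for the monotonicity---writing $\langle A(z,u,Du)-A(z,u,D\ell),\,Du-D\ell\rangle$ on the left, which (H2) bounds below pointwise---and then peels off
\[
A(z,u,D\ell)\ \longrightarrow\ A(z,\ell(x_0),D\ell)\ \longrightarrow\ \bigl(A(\cdot,\ell(x_0),D\ell)\bigr)_{z_0,\rho}
\]
on the right, each step evaluated at the fixed matrix $D\ell$. These errors then carry the constant factor $(1+|D\ell|)^{p-1}$, so Young's inequality with parameter $\varepsilon$ yields $\varepsilon(1+|D\ell|)^{p-2}|Du-D\ell|^2$ (absorbable) plus $c(\varepsilon)(1+|D\ell|)^p$ times $\omega(\cdots)$ or $V(\rho)$.

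Two smaller points. First, with $p$-growth the spatial cutoff should appear to the power $p$ (the paper uses $\chi^p$), so that the cross term $p\chi^{p-1}|D\chi|\,|Du-D\ell|^{p-1}|u-\ell|$ returns $\varepsilon\chi^p|Du-D\ell|^p$ after Young with exponents $(p,q)$; with $\eta^2$ this step does not close when $p>2$, since $\eta^q\ge\eta^2$ for $0\le\eta\le 1$. Second, the issue you flag as the ``main obstacle''---justifying the time-derivative term---is in fact handled lightly in the paper by a time cutoff $\zeta$ with $\zeta_t=-1/\eta$ on a short interval $(\tilde t-\eta,\tilde t)$ and letting $\eta\to 0$; the real difficulty lies in the decomposition above, not here.
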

\begin{proof}
Assume $z_0\in\Omega_T$ and $\rho\leq 1$ satisfy $Q_{\rho}(z_0)\Subset\Omega_T$. 
We take a standard cut-off functions $\chi\in C^\infty_0(B_\rho(x_0))$ and $\zeta\in C^1(\R)$. More precisely, let 
us take $\tilde{t}\in(t_0-\rho^2/4,t_0)$ and $\eta\in (0,\rho^2/4-\tilde{t}\,)$ and then $\zeta\in C^1(\R)$ satisfying 
\begin{equation}
 \left\{
 \begin{array}{ll}
  \zeta \equiv 1, & \text{on}\ (-\rho^2/4,\tilde{t}-\eta), \\ 
  \zeta \equiv 0, & \text{on}\ (-\infty,-\rho^2)\cup (\tilde{t},\infty), \\
  0\leq \zeta \leq 1, & \text{on}\ \R, \\ 
  \zeta_t =-1/\eta, & \text{on}\ (\tilde{t}-\eta , \tilde{t}), \\ 
  \lvert\zeta_t\rvert \leq 1/\rho^2, & \text{on}\ (-\rho^2,-\rho^2/4). 
 \end{array}\right.
 \label{cut-off}
\end{equation}
Moreover, $\chi\in C^\infty_0(B_\rho(x_0))$ satisfies $0\leq\chi\leq 1$, $\lvert D\chi\rvert\leq 4/\rho$, $\chi\equiv 1$ on $B_{\rho/2}(x_0)$.
Then $\varphi(x,t) :=\chi(t)\zeta(x)^p\bigl( u(x,t)-\ell(x)\bigr)$ is admissible as a test function in \eqref{ws}, and we obtain
\begin{align}
 \dashint_{Q_\rho(z_0)}\zeta\chi^p\langle &A(z,u,Du),Du-D\ell \rangle dz \notag\\
 =-\,&\dashint_{Q_\rho(z_0)}\langle A(z,u,Du),p\zeta\chi^{p-1}
 D\chi\otimes (u-\ell)\rangle dz \notag\\
 +&\dashint_{Q_\rho(z_0)}\langle u,\partial_t\varphi\rangle dz
 +\dashint_{Q_\rho(z_0)}\langle H,\varphi\rangle dz. \label{system2}
\end{align}
Furthermore, we have
\begin{align}
 -\,\dashint_{Q_\rho(z_0)}\zeta\chi^p\langle &A(z,u,D\ell),Du-D\ell \rangle dz
\notag\\
 =\,&\dashint_{Q_\rho(z_0)}\langle A(z,u,D\ell),p\zeta\chi^{p-1} D\chi\otimes
 (u-\ell)\rangle dz-\dashint_{Q_\rho(z_0)}\langle A(z,u,D\ell),D\varphi
 \rangle dz, \label{13}
\end{align}
and 
\begin{equation}
 \dashint_{Q_\rho(z_0)}\langle \left( A(\cdot ,\ell(x_0)
 ,D\ell)\right)_{z_0,\rho},D\varphi \rangle dz=0. \label{constcoeff}
\end{equation}
Adding \eqref{system2}, \eqref{13} and \eqref{constcoeff}, we obtain
\begin{align}
&\dashint_{Q_\rho(z_0)}\zeta\chi^p \langle A(z,u,Du)-A(z,u,D\ell),
 Du-D\ell\rangle dz \notag \\
 =&-\dashint_{Q_\rho(z_0)}\langle A(z,u,Du)-A(z,u,D\ell),
 p\zeta\chi^{p-1} D\chi\otimes (u-\ell)\rangle dz \notag \\
 &-\dashint_{Q_\rho(z_0)}\langle
A(z,u,D\ell)-A(z,\ell(x_0),D\ell),D\varphi\rangle dz
 \notag \\
 &-\dashint_{Q_\rho(z_0)}\langle A(z,\ell(x_0),D\ell)-\left(
 A(\cdot,\ell(x_0),D\ell)\right)_{z_0,\rho},D\varphi \rangle dz \notag \\
 &+\dashint_{Q_\rho(z_0)}\langle u-\ell ,\partial_t\varphi\rangle dz \notag\\ 
 &+\dashint_{Q_\rho(z_0)}\langle H,\varphi\rangle dz \notag \\
 =:& \>\> \hbox{I}+\hbox{II}+\hbox{III}+\hbox{IV}+\hbox{V}. \label{caccio-divide}
\end{align}
The terms $\hbox{I},\hbox{II},\hbox{III},\hbox{IV},\hbox{V}$ are defined above. 
Using the ellipticity condition ({\bf H2}) to the left-hand side of \eqref{caccio-divide}, we get
\begin{align}
 &\langle A(z,u,Du)-A(z,u,D\ell),Du-D\ell\rangle \notag\\
 =&\int_0^1\left\langle \partial_w A(z,u,sDu+(1-s)D\ell)(Du-D\ell),
 Du-D\ell\right\rangle ds \notag\\
 \geq& \lambda |Du-D\ell|^2\int_0^1(1+|sDu+(1-s)D\ell|)^{p-2}ds. \label{elliptic}
\end{align}
Then by using \eqref{GM2} in Lemma \ref{GM}, we obtain
\begin{align}
 &\langle A(z,u,Du)-A(z,u,D\ell),Du-D\ell\rangle \notag \\
 \geq& \lambda |Du-D\ell|^2\int_0^1
 (1+|sDu+(1-s)D\ell|^2)^{(p-2)/2}ds \notag\\
 \geq& 2^{(12-9p)/2}\lambda
 \left\{(1+|D\ell|)^{p-2}|Du-D\ell|^2+|Du-D\ell|^p\right\} . \label{elliptic2}
\end{align}
For $\varepsilon >0$ to be fixed later, using ({\bf H1}) and Young's inequality, we have
\begin{align}
 |\,\hbox{I}\,|
 \leq &\dashint_{Q_\rho(z_0)}p\zeta\chi^{p-1}\left\lvert\int_0^1\partial_w A(z,u,D\ell
 +s(Du-D\ell))(Du-D\ell)ds\right\rvert \lvert D\chi\rvert\lvert u-\ell\rvert dz \notag\\
 \leq &\dashint_{Q_\rho(z_0)}
 c(p,L)\zeta\chi^{p-1}\left\{ (1+\lvert D\ell\rvert )^{p-2}
 +\lvert Du-D\ell\rvert^{p-2}\right\} \lvert Du-D\ell\rvert \lvert D\chi\rvert\lvert u-\ell\rvert dz \notag\\
 \leq &\varepsilon\dashint_{Q_\rho(z_0)}\zeta\chi^p\left\{(1+\lvert D\ell\rvert)^{p-2}
 \lvert Du-D\ell\rvert^2+\lvert Du-D\ell\rvert^p\right\}dz \notag\\
 &+c(p,L,\varepsilon)\dashint_{Q_\rho(z_0)}\left\{
 (1+\lvert D\ell\rvert)^{p-2}\left\lvert\frac{u-\ell}{\rho}\right\rvert^2
 +\left\lvert\frac{u-\ell}{\rho}\right\rvert^p\right\}dz. \label{16}
\end{align}
In order to estimate $\hbox{II}$, we use ({\bf H3}), $D\varphi=\zeta\chi^p(Du-D\ell)+p\zeta\chi^{p-1}D\chi\otimes
(u-\ell)$, and again Young's inequality, we get 
\begin{align}
 \lvert\,\hbox{II}\,\rvert
 \leq &\varepsilon\dashint_{Q_\rho(z_0)}\zeta\chi^p(1+\lvert D\ell\rvert)^{p-2}|Du-D\ell|^2dz
 +\varepsilon^{-1}\dashint_{Q_\rho(z_0)}L^2
 \omega^2\left(\lvert u-\ell(x_0)\rvert^2\right)(1+\lvert D\ell\rvert)^p dz \notag\\
 &+\varepsilon\dashint_{Q_\rho(z_0)}(1+\lvert D\ell\rvert)^{p-2}\left\lvert\frac{u-\ell}{\rho}\right\rvert^2dz
 +\varepsilon^{-1}\dashint_{Q_\rho(z_0)}(4Lp)^2
 \omega^2\left(\lvert u-\ell(x_0)\rvert^2\right)(1+\lvert D\ell\rvert)^p dz \notag\\
 \leq &\varepsilon\dashint_{Q_\rho(z_0)}\zeta\chi^p(1+\lvert D\ell\rvert)^{p-2}\lvert Du-D\ell\rvert^pdz
 +\varepsilon\dashint_{Q_\rho(z_0)}(1+\lvert D\ell\rvert)^{p-2}\left\lvert\frac{u-\ell}{\rho}\right\rvert^pdz \notag\\
 &+c(p,L,\varepsilon)(1+\lvert D\ell\rvert)^p \omega^2\left(\dashint_{Q_\rho(z_0)}\lvert u-\ell(x_0)\rvert^2dz\right), \label{17}
\end{align}
where we use Jensen's inequality in the last inequality. 
We next estimate $\hbox{III}$ by using the VMO-condition ({\bf H4}) and Young's inequality, we have
\begin{align*}
 \lvert\hbox{III}\rvert
 \leq & \frac{\varepsilon}{2^{p-1}}\dashint_{Q_\rho(z_0)}
 \left\{\zeta\chi^p\lvert Du-D\ell\rvert+\frac{4p\zeta\lvert u-\ell\rvert}{\rho}\right\}^pdz
 +\left(\frac{2^{p-1}}{\varepsilon}\right)^{q/p}\dashint_{Q_\rho(z_0)}
 {V_{z_0}}^q(x,\rho)(1+\lvert D\ell\rvert)^pdz.
\end{align*}
Then using the fact that ${V_{z_0}}^q={V_{z_0}}^{q-1}\cdot V_{z_0}\leq (2L)^{q-1}V_{z_0}\leq 2LV_{z_0}$, 
we infer
\begin{align}
 \lvert\hbox{III}\rvert\leq & \varepsilon\dashint_{Q_\rho(z_0)}\zeta\chi^p\lvert Du-D\ell\rvert^pdz
 +c(p,\varepsilon)\dashint_{Q_\rho(z_0)}\left\lvert\frac{u-\ell}{\rho}\right\rvert^pdz
 +c(p,L,\varepsilon)(1+\lvert D\ell\rvert)^pV(\rho). \label{18}
\end{align} 
To estimate $\hbox{IV}$, recall that $\zeta_t$ satisfies $\zeta_t=-1/\eta$ on $(\tilde{t}-\eta,\tilde{t})$ and 
$\lvert\zeta_t\rvert\leq 1/\rho^2$ on $(-\rho^2,-\rho^2/4)$. This implies 
\begin{align}
 \hbox{IV}
 =&\dashint_{Q_\rho(z_0)}\zeta_t\chi^p\lvert u-\ell\rvert^2 dz 
  +\dashint_{Q_\rho(z_0)}\zeta\chi^p\cdot\partial_t\frac{1}{2}\lvert u-\ell\rvert^2 dz \notag\\ 
 =&\frac{1}{2}\dashint_{Q_\rho(z_0)}\zeta_t\chi^p\lvert u-\ell\rvert^2 dz \notag\\ 
 =&\frac{1}{2\lvert Q_\rho(z_0)\rvert}\int_{t_0-\rho^2}^{t_0-\rho^2/4}\int_{B_\rho(x_0)}\chi^p\left\lvert\frac{u-\ell}{\rho}\right\rvert^2 dxdt 
  -\frac{1}{2\eta\lvert Q_\rho(z_0)\rvert}\int_{\tilde{t}-\eta}^{\tilde{t}}\int_{B_\rho(x_0)}\chi^p\lvert u-\ell\rvert^2 dxdt \notag\\
 \leq &\frac{1}{2}\dashint_{Q_\rho(z_0)}(1+\lvert D\ell\rvert)^{p-2}\left\lvert\frac{u-\ell}{\rho}\right\rvert^2 dz 
  -\frac{1}{2\eta\lvert Q_\rho(z_0)\rvert}\int_{\tilde{t}-\eta}^{\tilde{t}}\int_{B_\rho(x_0)}\chi^p\lvert u-\ell\rvert^2 dz. 
 \label{para-term}
\end{align}
For $\varepsilon'>0$ to be fixed later, using ({\bf H5}), Lemma \ref{Young2} and Young's inequality, we have
\begin{align}
&|\,\hbox{V}\,| \notag\\
 \leq & \dashint_{Q_\rho(z_0)}a(\lvert Du-D\ell\rvert+\lvert D\ell\rvert)^p\zeta\chi^p\lvert u-\ell\rvert dz
 +\dashint_{Q_\rho(z_0)}(b\zeta\chi^p\rho)\left\lvert\frac{u-\ell}{\rho}\right\rvert dz \notag\\
 \leq & \dashint_{Q_\rho(z_0)}a\zeta\chi^p\left\{(1+\varepsilon')\lvert Du-D\ell\rvert^p
 +K(p,\varepsilon')\lvert D\ell\rvert^p\right\}\lvert u-\ell\rvert dz
 +\varepsilon b^q\rho^q
 +\varepsilon^{-p/q}\dashint_{Q_\rho(z_0)}\left\lvert\frac{u-\ell}{\rho}\right\rvert^p dz \notag\\
 \leq & a(1+\varepsilon')(2M+\lvert D\ell\rvert\rho)
 \dashint_{Q_\rho(z_0)}\zeta\chi^p\lvert Du-D\ell\rvert^p dz+ %2\varepsilon^{-p/q}
 c(p,\varepsilon)\dashint_{Q_\rho(z_0)} \left\lvert\frac{u-\ell}{\rho}\right\rvert^p dz \notag\\
 &+\varepsilon(1+\lvert D\ell\rvert)^p\rho^q\left\{ a^qK ^q \lvert D\ell\rvert^{q}+b^q\right\}. \label{19}
\end{align}
Combining \eqref{caccio-divide}, \eqref{elliptic2}, \eqref{17}, \eqref{18}, \eqref{para-term} and \eqref{19}, and set
$\lambda'=2^{(12-9p)/2}\lambda$C
$\Lambda :=\lambda'-3\varepsilon-a(1+\varepsilon')(2M+\lvert D\ell\rvert\rho)$, this gives
\begin{align}
 &\frac{1}{2\eta}\int_{\tilde{t}-\eta}^{\tilde{t}}
 \dashint_{B_\rho(x_0)}\chi^p\left\lvert\frac{u-\ell}{\rho(1+\lvert D\ell\rvert)}\right\rvert^2 dz
 +\Lambda\dashint_{Q_\rho(z_0)}\zeta\chi^p
 \left\{\frac{\lvert Du-D\ell\rvert^2}{(1+\lvert D\ell\rvert)^2}
 +\frac{\lvert Du-D\ell\rvert^p}{(1+\lvert D\ell\rvert)^p}\right\}dz \notag\\
 \leq & c(p,L,\varepsilon)\left[\dashint_{Q_\rho(z_0)}
 \left\{\left\lvert\frac{u-\ell}{\rho(1+\lvert D\ell\rvert)}\right\rvert^2
 +\left\lvert\frac{u-\ell}{\rho(1+\lvert D\ell\rvert)}\right\rvert^p\right\}dz
 +\omega\left(\dashint_{Q_\rho(z_0)}|u-\ell(x_0)|^2dz\right) +V(\rho) \right] \notag\\
 &+\varepsilon\left\{ a^q(1+K(p,\varepsilon'))^q|D\ell|^q+b^q\right\}\rho^q. \label{roughcaccio}
\end{align}
Now choose $\varepsilon=\varepsilon(\lambda ,p,a(M),M)>0$ and $\varepsilon'=\varepsilon'(\lambda ,p,a(M),M)>0$ in a right way 
(for more precise way of choosing $\varepsilon$ and $\varepsilon'$, we refer to \cite[Lemma 4.1]{DG}) and taking the limit 
$\eta\to 0$, we obtain \eqref{caccioppoli}. 
\end{proof}

To use the $\A$-caloric approximation lemma, we need to estimate 
$\dashint_{Q_\rho(z_0)}((u-\ell)\cdot\varphi_t-\A(D(u-\ell),D\varphi)) dz$.
\begin{lem}\label{A-caloric2}
Assume the same assumptions in Lemma \ref{Caccioppoli}. Then for any $z_0=(x_0,t_0)\in\Omega_T$ 
and $\rho\leq \rho_0$ satisfy $Q_{2\rho}(z_0)\Subset\Omega_T$, and any affine functions $\ell:\R^n\to\R^N$ 
with $|\ell(x_0)|\leq M$, the inequality
\begin{align}
 &\dashint_{Q_\rho(z_0)}\Bigl( \langle v,\varphi_t\rangle -\mathcal{A}(Dv,D\varphi)\Bigr) dz \notag\\
 \leq &C_2(1+\lvert D\ell\rvert)\biggl[ \mu^{1/2}\left(\sqrt{\Psi_*(z_0,2\rho,\ell)}\right)
 \sqrt{\Psi_*(z_0,2\rho,\ell)} %\notag\\
 +\Psi_*(z_0,2\rho,\ell)+\rho(a\lvert D\ell\rvert^p+b)\biggr]
 \sup_{Q_\rho(z_0)}\lvert D\varphi\rvert \label{Ah}
\end{align}
holds for all $\varphi\in C^\infty_0(B_\rho(x_0),\R^N)$ and a constant $C_2=C_2(n,\lambda,L,p,a(M))\geq 1$, where 
\begin{align*}
 \A(Dv,D\varphi):&=\frac{1}{(1+\lvert D\ell\rvert)^{p-1}}\left\langle
 \left( \partial_w A(\cdot,\ell(x_0),D\ell)\right)_{z_0,\rho}
 Dv,D\varphi\right\rangle , \\
 \Phi(z_0,\rho,\ell):&=\dashint_{Q_\rho(z_0)}
 \left\{\frac{\lvert Du-D\ell\rvert^2}{(1+\lvert D\ell\rvert)^2}
 +\frac{\lvert Du-D\ell\rvert^p}{(1+\lvert D\ell\rvert)^p}\right\} dz, \\
 \Psi(z_0,\rho,\ell):&=\dashint_{Q_\rho(z_0)}
 \left\{\frac{\lvert u-\ell\rvert^2}{\rho^2(1+\lvert D\ell\rvert)^2}
 +\frac{\lvert u-\ell\rvert^p}{\rho^p(1+\lvert D\ell\rvert)^p}\right\} dz, \\
 \Psi_*(z_0,\rho,\ell):&=\Psi(z_0,\rho,\ell)
 +\omega\left(\dashint_{Q_\rho(z_0)}\lvert u-\ell(x_0)\rvert^2dz\right)+V(\rho)
 +\left( a^q\lvert D\ell\rvert^q+b^q\right)\rho^q, \\
 v:&=u-\ell=u-\ell(x_0)-D\ell(x-x_0).
\end{align*}
\end{lem}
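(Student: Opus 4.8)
The plan is to test the weak formulation \eqref{ws} with $\varphi\in C^\infty_0(B_\rho(x_0),\R^N)$ (so $\varphi$ is compactly supported in space but carries time-dependence only through its own derivative), and to rewrite
\[
 \dashint_{Q_\rho(z_0)}\langle v,\varphi_t\rangle\,dz
 =\dashint_{Q_\rho(z_0)}\langle u-\ell,\varphi_t\rangle\,dz
 =\dashint_{Q_\rho(z_0)}\langle u,\varphi_t\rangle\,dz
\]
since $\ell=\ell(x)$ is $t$-independent and $\varphi$ has compact support in $B_\rho(x_0)$, so $\int\langle\ell,\varphi_t\rangle\,dz=\partial_t\!\int\langle\ell,\varphi\rangle\,dx=0$ after integration in $t$. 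Then \eqref{ws} converts $\dashint\langle u,\varphi_t\rangle\,dz$ into $\dashint\langle A(z,u,Du),D\varphi\rangle\,dz+\dashint\langle H,\varphi\rangle\,dz$. Thus the left-hand side of \eqref{Ah} becomes
\[
 \dashint_{Q_\rho(z_0)}\langle A(z,u,Du),D\varphi\rangle\,dz
 -\dashint_{Q_\rho(z_0)}\A(Dv,D\varphi)\,dz
 +\dashint_{Q_\rho(z_0)}\langle H,\varphi\rangle\,dz .
\]
Using $Dv=Du-D\ell$ and the constant-coefficient identity $\dashint_{Q_\rho(z_0)}\langle(A(\cdot,\ell(x_0),D\ell))_{z_0,\rho},D\varphi\rangle\,dz=0$ (as in \eqref{constcoeff}), I insert and subtract four comparison terms and split the integrand of the first two pieces as
\begin{align*}
 \langle A(z,u,Du)-A(z,u,D\ell),D\varphi\rangle
 &-\tfrac{1}{(1+|D\ell|)^{p-1}}\langle(\partial_wA(\cdot,\ell(x_0),D\ell))_{z_0,\rho}(Du-D\ell),D\varphi\rangle \\
 &+\langle A(z,u,D\ell)-A(z,\ell(x_0),D\ell),D\varphi\rangle \\
 &+\langle A(z,\ell(x_0),D\ell)-(A(\cdot,\ell(x_0),D\ell))_{z_0,\rho},D\varphi\rangle .
\end{align*}

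Each of these four groups, together with the $H$-term, is then bounded by $(1+|D\ell|)$ times one of the quantities in the bracket of \eqref{Ah}, times $\sup_{Q_\rho}|D\varphi|$. For the first group I write $A(z,u,Du)-A(z,u,D\ell)=\int_0^1\partial_wA(z,u,D\ell+s(Du-D\ell))(Du-D\ell)\,ds$ and compare the matrix $\int_0^1\partial_wA(z,u,D\ell+s(Du-D\ell))\,ds$ with $(\partial_wA(\cdot,\ell(x_0),D\ell))_{z_0,\rho}$; the difference is controlled by the continuity modulus $\mu$ via ({\bf H1}), by $\omega$ via ({\bf H3}), and by the VMO-function $V_{z_0}$ via ({\bf H4}) applied to $\partial_wA$ (note ({\bf H1}) gives the needed bound $(1+|w|)|\partial_wA|\le L(1+|w|)^{p-1}$ and ({\bf H4}) is a VMO-condition on $A/(1+|w|)^{p-1}$, so the oscillation of $\partial_wA$ in $z$ is handled analogously or is absorbed into the other pieces). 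Applying Cauchy–Schwarz on the $s$-integral and then Hölder / Young, the Jensen inequality pushes the averages of $\mu$ inside its argument, producing the term $\mu^{1/2}(\sqrt{\Psi_*})\sqrt{\Psi_*}$ after recognizing that $\dashint_{Q_{2\rho}}$ of the relevant gradient excess is $\le\Phi(z_0,2\rho,\ell)$, which in turn is controlled by $\Psi_*(z_0,2\rho,\ell)$ through the Caccioppoli inequality \eqref{caccioppoli} (this is why the statement is phrased with $\Psi_*$ at scale $2\rho$ rather than with $\Phi$). The second and third comparison terms give $\omega(\cdots)$ and $V(\rho)$ contributions directly from ({\bf H3}) and ({\bf H4}), again after Cauchy–Schwarz and using $\dashint V_{z_0}^q\le 2L\,\dashint V_{z_0}$; these are all $\le\Psi_*$. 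The $H$-term is estimated by ({\bf H5}): $|\dashint\langle H,\varphi\rangle|\le(a|Du|^p+b)$-type bound, and writing $|Du|^p\le c(|Du-D\ell|^p+|D\ell|^p)$ together with $|u|\le M$ and the Caccioppoli control of $\dashint|Du-D\ell|^p$ yields the $\rho(a|D\ell|^p+b)(1+|D\ell|)$ term (the factor $\rho$ arises because $|\varphi|\le\rho\sup|D\varphi|$ by the Poincaré/fundamental-theorem estimate on $B_\rho$).

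The main obstacle will be the first comparison term: carefully tracking how the mixed $L^2$–$L^p$ structure interacts with the passage to $\mu^{1/2}(\sqrt{\Psi_*})\sqrt{\Psi_*}$. One has to split $Q_{2\rho}$ (or $Q_\rho$) according to whether $|Du-D\ell|$ is large or small relative to $(1+|D\ell|)$, use the sub-quadratic/super-quadratic bounds on $\mu$ on each piece, and then invoke the Caccioppoli inequality \eqref{caccioppoli} at scale $2\rho$ to replace the excess $\Phi(z_0,2\rho,\ell)$ — note the hypothesis $Q_{2\rho}(z_0)\Subset\Omega_T$ is exactly what makes \eqref{caccioppoli} available on $Q_\rho$ with data on $Q_{2\rho}$. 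The concavity of $\mu$ and $\omega$ (from ({\bf H1}), ({\bf H3})) and Jensen's inequality are what allow the averaging to be moved inside, and keeping all constants of the form $C_2=C_2(n,\lambda,L,p,a(M))$ requires that the choices made in \eqref{caccioppoli} (hence $C_1$) be absorbed; this bookkeeping, rather than any single hard estimate, is the delicate part.
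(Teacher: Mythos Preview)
Your overall strategy is right: pass to the weak formulation, split the coefficient difference into comparison terms, estimate each with ({\bf H1})--({\bf H5}), and finally invoke the Caccioppoli inequality on $Q_\rho\subset Q_{2\rho}$ to convert all $\Phi$-quantities into $\Psi_*(z_0,2\rho,\ell)$. The use of $|\varphi|\le\rho\sup|D\varphi|$ for the $H$-term and the Jensen/concavity trick to move averages inside $\mu$ and $\omega$ are also correct.

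The gap is in your first comparison group. You propose to write
\[
 A(z,u,Du)-A(z,u,D\ell)=\int_0^1\partial_wA(z,u,D\ell+sDv)\,Dv\,ds
\]
and then compare the matrix $\int_0^1\partial_wA(z,u,D\ell+sDv)\,ds$ with $\bigl(\partial_wA(\cdot,\ell(x_0),D\ell)\bigr)_{z_0,\rho}$. But this comparison varies \emph{three} arguments simultaneously: $w$ (handled by $\mu$), $u$, and $z$ (from pointwise to average). The hypotheses give you nothing for the last two: ({\bf H3}) is continuity of $A$ in $u$, not of $\partial_wA$; and ({\bf H4}) is a VMO-condition on $A$, not on $\partial_wA$. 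Your parenthetical remark that ``the oscillation of $\partial_wA$ in $z$ is handled analogously or is absorbed into the other pieces'' is exactly the missing step, and it does not go through: after peeling off the $\mu$-part you are left with $\dashint\bigl\langle[\partial_wA(z,u,D\ell)-(\partial_wA(\cdot,\ell(x_0),D\ell))_{z_0,\rho}]Dv,D\varphi\bigr\rangle\,dz$, and the only available bound $|\partial_wA|\le L(1+|w|)^{p-2}$ produces a contribution of order $(1+|D\ell|)^{p-1}\sqrt{\Phi}$, which is far too large.

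The paper fixes this by arranging the decomposition so that the $\partial_wA$-comparison stays \emph{entirely inside the averaged quantities} and varies only $w$. Concretely, one writes
\[
 \bigl(A(\cdot,\ell(x_0),Du)\bigr)_{z_0,\rho}-\bigl(A(\cdot,\ell(x_0),D\ell)\bigr)_{z_0,\rho}
 =\int_0^1\bigl(\partial_wA(\cdot,\ell(x_0),D\ell+sDv)\bigr)_{z_0,\rho}Dv\,ds,
\]
so term I becomes $\dashint\int_0^1\langle[\,(\partial_wA(\cdot,\ell(x_0),D\ell))_{z_0,\rho}-(\partial_wA(\cdot,\ell(x_0),D\ell+sDv))_{z_0,\rho}\,]Dv,D\varphi\rangle\,ds\,dz$, which needs only $\mu$. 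The VMO-term (II) and the $u$-continuity term (III) are then taken at $w=Du$, namely $\bigl(A(\cdot,\ell(x_0),Du)\bigr)_{z_0,\rho}-A(z,\ell(x_0),Du)$ and $A(z,\ell(x_0),Du)-A(z,u,Du)$; the price is that the growth factor becomes $(1+|Du|)^{p-1}$, which after splitting $|Du|\le|D\ell|+|Dv|$ generates an extra $\Phi(z_0,\rho,\ell)$ contribution---harmless, since it is absorbed by Caccioppoli into $\Psi_*(z_0,2\rho,\ell)$ anyway. With this rearrangement all four terms are controlled using only the stated hypotheses, and the rest of your outline (Young, Jensen, Caccioppoli at scale $2\rho$) goes through verbatim.
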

\begin{proof}
Assume $z_0\in\Omega_T$ and $\rho\leq 1$ satisfy $Q_{2\rho}(z_0)\Subset\Omega_T$.
Without loss of generality we may assume $\displaystyle\sup_{Q_\rho(z_0)}\lvert D\varphi\rvert\leq 1$.
Note $\displaystyle\sup_{Q_\rho(z_0)}\lvert\varphi\rvert\leq\rho\leq 1$. Using the fact that 
$\int_{Q_\rho(z_0)}A(z_0,\ell(x_0),w)D\varphi dx=0$, we deduce
\begin{align}
 &(1+\lvert D\ell\rvert)^{p-1}\dashint_{Q_\rho(z_0)}\Bigl( v\cdot\varphi_t-\mathcal{A}(Dv,D\varphi)\Bigr) dz \notag\\
 =&\dashint_{Q_\rho(z_0)}\int_0^1
 \left\langle\left[\bigl( \partial_w A(\cdot,\ell(x_0),D\ell)\bigr)_{z_0,\rho}
 -\bigl( \partial_w A(\cdot,\ell(x_0),D\ell+sDv)\bigr)_{z_0,\rho}
 \right] Dv,D\varphi\right\rangle dsdz \notag\\
 &+\dashint_{Q_\rho(z_0)}
 \left\langle\bigl(A(\cdot,\ell(x_0),Du)\bigr)_{z_0,\rho}-A(z,\ell(x_0),Du),
 D\varphi\right\rangle dz \notag\\
 &+\dashint_{Q_\rho(z_0)}
 \langle A(z,\ell(x_0),Du)-A(z,u,Du),D\varphi\rangle dz \notag\\
 &+\dashint_{Q_\rho(z_0)}\langle H,\varphi\rangle dz \notag\\
 =&:\hbox{I}+\hbox{II}+\hbox{III}+\hbox{IV} \label{21}
\end{align}
where terms $\hbox{I},\hbox{II},\hbox{III},\hbox{IV}$ are define above. 

Using the modulus of continuity $\mu$ from ({\bf H1}), Jensen's inequality and H\"{o}lder's inequality, we estimate 
\begin{align}
 \lvert\,\hbox{I}\,\rvert
 &\leq %2^{2p-4}
 c(p,L)(1+\lvert D\ell\rvert)^{p-1}\dashint_{Q_\rho(z_0)}\mu\left(\frac{\lvert Du-D\ell\rvert}{1+\lvert D\ell\rvert}\right)
 \left\{\frac{\lvert Du-D\ell\rvert}{1+\lvert D\ell\rvert}+\frac{\lvert Du-D\ell\rvert^{p-1}}{(1+\lvert D\ell\rvert)^{p-1}}\right\}dz \notag\\
 &\leq %2^{2p-4}L
 c\, (1+\lvert D\ell\rvert)^{p-1}\left[ \mu^{1/2}\left(\sqrt{\Phi(z_0,\rho,\ell)}\right)\sqrt{\Phi(z_0,\rho,\ell)}
 +\mu^{1/p}\left(\Phi^{1/2}(z_0,\rho,\ell)\right)\Phi^{1/q}(z_0,\rho,\ell)\right] \notag\\
 &\leq %2^{2p-3}L
 c\, (1+\lvert D\ell\rvert)^{p-1}\left[ \mu^{1/2}\left(\sqrt{\Phi(z_0,\rho,\ell)}\right) \sqrt{\Phi(z_0,\rho,\ell)}
 +\Phi(z_0,\rho,\ell)\right]. \label{22}
\end{align}
The last inequality follows from the fact that $a^{1/p}b^{1/q}=a^{1/p}b^{1/p}b^{(p-2)/p}\leq a^{1/2}b^{1/2}+b$ 
holds by Young's inequality. 

By using the VMO-condition, Young's inequality and the bound $V_{x_0}(x,\rho)\leq 2L$, 
the term $\hbox{II}$ can be estimated as 
\begin{align}
 \lvert\,\hbox{II}\,\rvert
 &\leq %2^{p-2}
 c(p)(1+\lvert D\ell\rvert)^{p-1}\dashint_{Q_\rho(z_0)}
 \left\{V_{z_0}(z,\rho)+V_{z_0}(z,\rho)\frac{\lvert Du-D\ell\rvert^{p-1}}{(1+\lvert D\ell\rvert)^{p-1}}\right\}dz \notag\\
 &\leq %2^{p-2}
 c\, (1+\lvert D\ell\rvert)^{p-1} \left[\left( 1+(2L)^{p-1}\right) V(\rho)+\Phi(z_0,\rho,\ell)\right] . \label{23}
\end{align}
Similarly, we estimate the term $\hbox{III}$ by using the continuity condition ({\bf H3}), Young's inequality, the bound
$\omega\leq 1$ and Jensen's inequality. This leads us to 
\begin{align}
 \lvert\hbox{III}\rvert
 &\leq L\dashint_{Q_\rho(z_0)}
 (1+\lvert D\ell\rvert+\lvert Du-D\ell\rvert)^{p-1}\omega\left(\lvert u-\ell(x_0)\rvert^2\right)dz \notag\\
 &\leq %2^{p-1}L
 c(p,L)(1+\lvert D\ell\rvert)^{p-1} \left[\omega\left(\dashint_{Q_\rho(z_0)}\lvert u-\ell(x_0)\rvert^2dz\right)
 +\Phi(z_0,\rho,\ell)\right] .  \label{24}
\end{align}
By using the growth condition ({\bf H5}) and $\displaystyle\sup_{B_\rho(x_0)}\lvert\varphi\rvert\leq\rho\leq 1$, we have
\begin{align}
 \lvert\hbox{IV}\rvert
 &\leq \dashint_{Q_\rho(z_0)}\rho(a\lvert Du\rvert^p+b)dz \notag\\
 &\leq 2^{p-1}a(1+\lvert D\ell\rvert)^p\Phi(z_0,\rho,\ell)
 +2^{p-1}\rho(1+\lvert D\ell\rvert)^{p-1}(a\lvert D\ell\rvert^p+b). \label{25}
\end{align}
Therefore combining \eqref{21}, \eqref{22}, \eqref{23}, \eqref{24} and \eqref{25}, and using Caccioppoli-type inequality 
(Lemma \ref{Caccioppoli}), we have 
\begin{align*}
 &\dashint_{Q_\rho(z_0)}\left( \langle v,\varphi_t\rangle -\A(Dv,D\varphi)\right) dz \\
 \leq &2^{p+1}(1+\lvert D\ell\rvert)^p(1+a+(2L)^{p-1}) \\
  &\times\left[ \mu^{1/2}\left(\sqrt{\Phi(z_0,\rho,\ell)}\right)\sqrt{\Phi(z_0,\rho,\ell)}
  +\Phi(z_0,\rho,\ell)+\Psi_*(z_0,\rho,\ell)+\rho(a\lvert D\ell\rvert^p+b)\right] \\
 \leq &C_2(1+\lvert D\ell\rvert)^p
  \left[ \mu^{1/2}\left(\sqrt{\Psi_*(z_0,2\rho,\ell)}\right)\sqrt{\Psi_*(z_0,2\rho,\ell)}
  +\Psi_*(z_0,2\rho,\ell)+\rho(a\lvert D\ell\rvert^p+b)\right] ,
\end{align*}
where we set $C_2:=2^{n+p+3}C_1(1+a+(2L)^{p-1})$ at the last inequality and this completes the proof.
\end{proof}

From now on, we write $\Phi(\rho)=\Phi(z_0,\rho,\ell_{z_0,\rho})$, $\Psi(\rho)=\Psi(z_0,\rho,\ell_{z_0,\rho})$, 
$\Psi_*(\rho)=\Psi_*(z_0,\rho,\ell_{z_0,\rho})$ for $z_0\in\Omega_T$ and $0<\rho\leq 1$. Here $\ell_{z_0,\rho}$ is a minimizer 
which we introduce in \eqref{affine}. 

Now we are ready to establish the excess improvement. 

\begin{lem}\label{Ex-imp}
Assume the same assumption in Lemma \ref{Caccioppoli}. 
Let $\theta\in (0,1/4]$ be arbitrary and impose the following smallness conditions on the excess: 
\begin{enumerate}
 \item $\mu^{1/2}\left(\sqrt{\Psi_*(\rho)}\right)+\sqrt{\Psi_*(\rho)}\leq \frac{\delta}{2}$ with the constant 
  $\delta =\delta(n,N,p,\lambda,L,\theta^{n+p+4})$ from Lemma \ref{A-caloric}, 
 \item $\Psi(\rho)\leq \frac{\theta^{n+4}}{4n(n+2)}$, 
 \item $\gamma(\rho):=[{\Psi_*}^{q/2}(\rho)+\delta^{-q}\rho^q(a\lvert D\ell\rvert+b)^q]^{1/q}\leq 1$.
\end{enumerate}
Then there holds the excess improvement estimate 
\begin{equation}
 \Psi(\theta\rho)\leq C_3\theta^2\Psi_*(\rho) \label{ex-imp}
\end{equation}
with a constant $C_3=C_3(n,\lambda,L,p,a(M))\geq 1$.
\end{lem}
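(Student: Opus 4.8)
The plan is to combine the $\A$-caloric approximation lemma (Lemma \ref{A-caloric}) with the a priori estimate for $\A$-caloric functions (Lemma \ref{A-caloric-apriori}) in the now-standard iteration scheme, working with the rescaled function $v = u - \ell_{z_0,\rho}$ and the normalizing factor $(1+\lvert D\ell_{z_0,\rho}\rvert)$. First I would record that, thanks to Lemma \ref{A-caloric2}, the function $v$ is ``almost $\A$-caloric'' on $Q_\rho(z_0)$ in the quantitative sense
\[
 \dashint_{Q_{\rho/2}(z_0)}\bigl( \langle v,\varphi_t\rangle - \A(Dv,D\varphi)\bigr)\,dz
 \leq C_2(1+\lvert D\ell\rvert)\bigl[ \mu^{1/2}(\sqrt{\Psi_*(\rho)})\sqrt{\Psi_*(\rho)} + \Psi_*(\rho) + \rho(a\lvert D\ell\rvert^p+b)\bigr]\sup\lvert D\varphi\rvert ,
\]
and that the Caccioppoli inequality of Lemma \ref{Caccioppoli} bounds the rescaled energy $\dashint_{Q_{\rho/2}}(\lvert v/\rho\rvert^2 + \lvert Dv\rvert^2 + \gamma^{p-2}(\cdots))\,dz$ by $C_1\Psi_*(\rho)$. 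After dividing $v$ by a suitable constant $\Xi \asymp (1+\lvert D\ell\rvert)\sqrt{\Psi_*(\rho)}$ (and choosing $\gamma = \gamma(\rho)$ as in hypothesis (iii) so that the $\gamma^{p-2}$-weighted terms are also controlled), the normalized function meets the energy bound ``$\leq 1$'' of Lemma \ref{A-caloric}, and the approximate-caloricity defect is $\leq \delta$ precisely because of smallness hypothesis (i). Thus Lemma \ref{A-caloric} produces an $\A$-caloric $h$ on $Q_{\rho/4}(z_0)$ with $\dashint_{Q_{\rho/4}} (\lvert h/(\rho/4)\rvert^2 + \gamma^{p-2}\lvert h/(\rho/4)\rvert^p + \lvert Dh\rvert^2 + \gamma^{p-2}\lvert Dh\rvert^p)\,dz \leq 2\cdot 2^{n+2+2p}$ and $\dashint_{Q_{\rho/4}}(\lvert (\bar v - h)/(\rho/4)\rvert^2 + \gamma^{p-2}\lvert (\bar v - h)/(\rho/4)\rvert^p)\,dz \leq \theta^{n+p+4}$, where $\bar v$ is the rescaled $v$.

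Next I would transfer the decay from $h$ to $v$. Let $\ell$ be the first-order Taylor polynomial of $h$ at the vertex $z_0$ (an affine function independent of $t$); Lemma \ref{A-caloric-apriori} applied with $s=2$ and $s=p$ gives $\dashint_{Q_{\theta\rho}(z_0)}\lvert (h-\ell)/(\theta\rho)\rvert^s\,dz \leq c_2\theta^2\,\dashint_{Q_{\rho/4}}\lvert (h-\ell)/(\rho/4)\rvert^s\,dz$, and the $C^1$ bound on $h$ coming from the same lemma together with the energy bound controls the right-hand side by $c\,\theta^2 \cdot 2^{n+2+2p}$. Writing $u - \ell_{z_0,\theta\rho}$ in terms of $u - \ell_{z_0,\rho}$, of $h$, and of $\ell$, and using the minimality of $\ell_{z_0,\theta\rho}$ together with Lemma \ref{affine-esti} to compare $D\ell_{z_0,\theta\rho}$ with the relevant affine gradients, I would unscale: multiply back by $\Xi$ and $\theta\rho$ appropriately. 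The triangle inequality then splits $\Psi(\theta\rho)$ into (a) the contribution of $h$ near its Taylor polynomial, bounded by $c\,\theta^2 \Psi_*(\rho)$ via Lemma \ref{A-caloric-apriori}; (b) the approximation error $\bar v - h$, bounded by $\theta^{-(n+p+2)}\cdot\theta^{n+p+4}\Psi_*(\rho) = \theta^2 \Psi_*(\rho)$ after accounting for the change of domain from $Q_{\rho/4}$ to $Q_{\theta\rho}$ (this is where the exponent $n+p+4$ in hypothesis (i) and in the choice of $\delta$ is calibrated); and (c) error terms in $\Psi$ of the form $\rho^q(a\lvert D\ell\rvert+b)^q$, which are absorbed into $\Psi_*$ by hypothesis (iii). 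One must also check that the new $\lvert D\ell_{z_0,\theta\rho}\rvert$ stays comparable to $\lvert D\ell_{z_0,\rho}\rvert$ up to lower-order terms, so the normalizing denominators $(1+\lvert D\ell\rvert)$ used in $\Psi(\theta\rho)$ versus $\Psi(\rho)$ are interchangeable at the cost of constants; hypothesis (ii) (with Lemma \ref{affine-esti}) is exactly what guarantees $\lvert D\ell_{z_0,\theta\rho} - D\ell_{z_0,\rho}\rvert$ is small relative to $\theta^{(n+4)/2}$.

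Collecting (a), (b), (c) yields $\Psi(\theta\rho) \leq C_3\theta^2\Psi_*(\rho)$ with $C_3 = C_3(n,\lambda,L,p,a(M))$, as claimed; note $C_3$ must not depend on $\theta$, which is why in step (a) one uses the $\theta^2$-gain from Lemma \ref{A-caloric-apriori} rather than absorbing powers of $\theta$ into the constant, and in step (b) one uses that $\delta$ (hence the size of the approximation error) was chosen depending on $\theta^{n+p+4}$. The main obstacle — and the point requiring the most care — is the bookkeeping of scaling factors across the three different cylinders $Q_{\rho/2}$, $Q_{\rho/4}$, $Q_{\theta\rho}$ and the two exponents $2$ and $p$ simultaneously: one has to verify that the single normalization constant $\Xi$ (built from $\Psi_*(\rho)$) makes \emph{both} the $L^2$- and $L^p$-pieces of the energy satisfy the hypothesis of Lemma \ref{A-caloric}, which forces the particular form of $\gamma(\rho)$ in hypothesis (iii), and that after unscaling the change-of-domain factor $\theta^{-(n+p+2)}$ hitting the approximation error is beaten by the gain $\theta^{n+p+4}$ with two powers of $\theta$ to spare. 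Degenerate cases ($\Psi_*(\rho)=0$, or $\lvert D\ell_{z_0,\rho}\rvert$ large versus small) are handled separately but routinely.
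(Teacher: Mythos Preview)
Your proposal is correct and follows essentially the same route as the paper's proof: rescale $v=u-\ell_{z_0,\rho}$ by $C_2(1+\lvert D\ell\rvert)\gamma(\rho)$, verify the hypotheses of Lemma~\ref{A-caloric} via Lemma~\ref{A-caloric2} and the Caccioppoli inequality, compare the resulting $\A$-caloric $h$ with an affine function through Lemma~\ref{A-caloric-apriori}, then unscale and use hypothesis~(ii) with Lemma~\ref{affine-esti} to switch the normalizing factor from $(1+\lvert D\ell_{z_0,\rho}\rvert)$ to $(1+\lvert D\ell_{z_0,\theta\rho}\rvert)$. The only cosmetic difference is that the paper takes the affine comparison $h_{z_0,\rho/4}+(Dh)_{z_0,\rho/4}(x-x_0)$ built from averages rather than your Taylor polynomial at the vertex, but since Lemma~\ref{A-caloric-apriori} applies to any affine $\ell$, either choice works.
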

\begin{proof}
Set 
\[
 w:=\frac{u-\ell_{x_0,\rho}}{C_2(1+\lvert D\ell\rvert)\gamma(\rho)}. 
\]
From Lemma \ref{A-caloric2} and the assumption (i) we have 
\begin{align*}
 \dashint_{Q_{\rho/2}(z_0)}\Bigl( \langle w,\varphi_t\rangle -\mathcal{A}(Dw,D\varphi)\Bigr) dz 
 \leq &\biggl[ \mu^{1/2}\left(\sqrt{\Psi_*(\rho)}\right) +\sqrt{\Psi_*(\rho)} 
 +\frac{\delta}{2}\biggr]\sup_{Q_{\rho/2}(z_0)}\lvert D\varphi\rvert \notag\\
 \leq &\delta\sup_{Q_{\rho/2}(z_0)}\lvert D\varphi\rvert ,
\end{align*}
for all $\varphi\in C^\infty_0(Q_{\rho/2}(z_0),\R^N)$. Moreover, using Caccioppoli-type inequality and the assumption (iii), we get 
\begin{align*}
 &\dashint_{Q_{\rho/2}(z_0)}\left\lvert\frac{w}{\rho/2}\right\rvert^2+\gamma^{p-2}\left\lvert\frac{w}{\rho/2}\right\rvert^p dz 
 +\dashint_{Q_{\rho/2}(z_0)}\lvert Dw\rvert^2+\gamma^{p-2}\lvert Dw\rvert^p dz \\ 
 \leq &\frac{1}{C_2^2\gamma^2}\left\{ 2^{n+p+2}\Psi(\rho)+C_1\Psi_*(\rho)\right\} \\
 \leq &\frac{\max\{ 2^{n+p+2},C_1\}}{C_2^2}\leq 1.
\end{align*}
Therefore the $\A$-caloric approximation lemma (Lemma \ref{A-caloric}) implies the existence of  
\[
 h\in L^p(t_0-(\rho/4)^2,t_0; W^{1,2}(B_{\rho/4}(x_0),\R^N))
\]
which is $\A$-caloric on $Q_{\rho/4}(z_0)$ and satisfies  
\[
 \dashint_{Q_{\rho/4}(z_0)}\left(\left\lvert\frac{h}{\rho/4}\right\rvert^2+\gamma^{p-2}\left\lvert\frac{h}{\rho/4}\right\rvert^p\right) dz 
 +\dashint_{Q_{\rho/4}(z_0)}\Bigl(\lvert Dh\rvert^2+\gamma^{p-2}\lvert Dh\rvert^p\Bigr) dz\leq 2\cdot 2^{n+2+2p}
\]
and 
\begin{equation}
 \dashint_{Q_{\rho/4}(z_0)}\left(\left\lvert\frac{w-h}{\rho/4}\right\rvert^2+\gamma^{p-2}\left\lvert\frac{w-h}{\rho/4}\right\rvert^p\right) dz 
 \leq \theta^{n+p+4}. \label{A-caloric-esti}
\end{equation}
Then from Lemma \ref{A-caloric-apriori}, we have for $s=2$ respectively for $s=p$
\begin{align*}
 &\gamma^{s-2}\left(\theta\rho\right)^{-s}
  \dashint_{Q_{\theta\rho}(z_0)}\lvert h-h_{z_0,\rho/4}-(Dh)_{z_0,\rho/4}(x-x_0)\rvert^s dz \\ 
 \leq &c(s)\gamma^{s-2}\theta^s\left(\frac{\rho}{4}\right)^{-s} 
  \dashint_{Q_{\rho/4}(z_0)}\lvert h-h_{z_0,\rho/4}-(Dh)_{z_0,\rho/4}(x-x_0)\rvert^s dz \\ 
 \leq &3^{s-1}c(s)\gamma^{s-2}\theta^s\left(\frac{\rho}{4}\right)^{-s} 
  \left[\dashint_{Q_{\rho/4}(z_0)}\lvert h\rvert^s dz+\lvert h_{z_0,\rho/4}\rvert^s 
  +\lvert (Dh)_{z_0,\rho/4}\rvert^s\left(\frac{\rho}{4}\right)^s\right] \\ 
 \leq &2\cdot 3^{s-1}c(s)\gamma^{s-2}\theta^s\left[\left(\frac{\rho}{4}\right)^{-s} 
  \dashint_{Q_{\rho/4}(z_0)}\lvert h\rvert^s dz+\dashint_{Q_{\rho/4}(z_0)}\lvert Dh\rvert^s dz\right] \\
 \leq &2^{n+4+p}\cdot 3^{s-1}c(s)\theta^s .
\end{align*}
Thus, using \eqref{A-caloric-esti} we obtain 
\begin{align*}
 &\gamma^{s-2}\left(\theta\rho\right)^{-s} 
  \dashint_{Q_{\theta\rho}(z_0)}\lvert w-h_{z_0,\rho/4}-(Dh)_{z_0,\rho/4}(x-x_0)\rvert^s dz \\ 
 \leq &2^{s-1}\left(\theta\rho\right)^{-s} 
  \left[ \dashint_{Q_{\theta\rho}(z_0)}\gamma^{s-2}\lvert w-h\rvert^s dz 
  +\gamma^{s-2}\dashint_{Q_{\theta\rho}(z_0)}\lvert h-h_{z_0,\rho/4}-(Dh)_{z_0,\rho/4}(x-x_0)\rvert^s dz \right] \\
 \leq &2^{s-1}
  \left[4^{n+2-s}\theta^{-n-2-s}\dashint_{Q_{\rho/4}(z_0)}\gamma^{s-2}\left\lvert\frac{w-h}{\rho/4}\right\rvert^s dz 
  +3^{s-1}\cdot 2^{n+4+p}c(s)\theta^s \right] \\
 \leq &2^{s-1}\Bigl( 4^{n+2-s}+3^{s-1}\cdot 2^{n+4+p}c(s)\Bigr)\theta^2 .
\end{align*}
Scaling back to $u$ we have 
\begin{align*}
 &(\theta\rho)^{-s}\dashint_{Q_{\theta\rho}(z_0)}\lvert u-\ell_{z_0,\theta\rho}\rvert^s dz \\ 
 \leq & c(n,s)(\theta\rho)^{-s}\dashint_{Q_{\theta\rho}(z_0)} 
  \lvert u-\ell_{z_0,\rho}-C_2\gamma(1+\lvert D\ell_{z_0,\rho}\rvert)(h_{z_0,\rho/4}-(Dh)_{z_0,\rho/4}(x-x_0))\rvert^s dz \\ 
 = & c(n,s)C_2^s\gamma^s(1+\lvert D\ell_{z_0,\rho}\rvert)^s(\theta\rho)^{-s} 
  \dashint_{Q_{\theta\rho}(z_0)}\lvert w-h_{z_0,\rho/4}-(Dh)_{z_0,\rho/4}(x-x_0)\rvert^s dz \\
 \leq & c(n,s,p,C_2)\gamma^2(1+\lvert D\ell_{z_0,\rho}\rvert)^s\theta^2 \\ 
 \leq & c(1+\lvert D\ell_{z_0,\rho}\rvert)^s\theta^2[{\Psi_*}^{q/2}(\rho)+2^{q/p}\delta^{-q}\Psi_*(\rho)]^{2/q} \\ 
 \leq & c(1+\lvert D\ell_{z_0,\rho}\rvert)^s\theta^2\Psi_*(\rho) .
\end{align*}
Here we want to replace the term $(1+\lvert D\ell_{z_0,\rho}\rvert)$ by $(1+\lvert D\ell_{z_0,\theta\rho}\rvert)$. To do this, 
using \eqref{affine-esti1} from Lemma \ref{affine-esti} and the assumption (ii), we have 
\begin{align*}
 \lvert D\ell_{z_0,\theta\rho}-D\ell_{z_0,\rho}\rvert^2 
 \leq &\frac{n(n+2)}{(\theta\rho)^2}\dashint_{Q_{\theta\rho}(z_0)}\lvert u-\ell_{z_0,\rho}\rvert^2 dz \\ 
 \leq &\frac{n(n+2)}{\theta^{n+4}\rho^2}\dashint_{Q_\rho(z_0)}\lvert u-\ell_{z_0,\rho}\rvert^2 dz \\ 
 \leq &\frac{n(n+2)}{\theta^{n+4}}(1+\lvert D\ell_{z_0,\rho}\rvert)^2\Psi(\rho) 
 \leq \frac{1}{4}(1+\lvert D\ell_{z_0,\rho}\rvert)^2 .
\end{align*} 
This yields 
\[
 1+\lvert D\ell_{z_0,\rho}\rvert \leq 2(1+\lvert D\ell_{z_0,\theta\rho}\rvert) .
\]
Thus we have 
\[
 (\theta\rho)^{-s}\dashint_{Q_{\theta\rho}(z_0)}\lvert u-\ell_{z_0,\theta\rho}\rvert^s dz 
 \leq c(1+\lvert D\ell_{z_0,\theta\rho}\rvert)^s\theta^2\Psi_*(\rho) ,
\]
and this immediately yields the claim. 
\end{proof}

Let fix an arbitrarily H\"older exponent $\alpha\in (0,1)$ and define the Campanato-type excess 
\[
 C_\alpha (z_0,\rho):=C_\alpha(\rho)=\rho^{-2\alpha}\dashint_{Q_\rho(z_0)}\lvert u-u_{z_0,\rho}\rvert^2 dz .
\]
Here we iterate the excess improvement estimate \eqref{ex-imp} and obtain the boundedness of two excess functional, $\Psi_*$ and 
$C_\alpha$. 

\begin{lem}\label{excess-imp}
Assume the same assumption in Lemma \ref{Caccioppoli}. 
For every $\alpha\in (0,1)$ there exist constants $\varepsilon_*, \kappa_*, \rho_*>0$ and $\theta_*\in (0,1/8]$ such that 
the conditions 
\begin{equation}
 \Psi(\rho)<\varepsilon_* \qquad \text{and}\qquad C_\alpha(\rho)<\kappa_* \tag{$A_0$}
\end{equation}
for all $0<\rho<\rho_*$ with $Q_\rho(z_0)\Subset\Omega_T$, imply 
\begin{equation}
 \Psi(\theta_*^k\rho)<\varepsilon_* \qquad \text{and}\qquad C_\alpha(\theta_*^k\rho)<\kappa_* \tag{$A_k$} \label{Ak}
\end{equation}
respectively, for every $k\in\N$. 
\end{lem}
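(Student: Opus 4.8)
The plan is to prove Lemma \ref{excess-imp} by induction on $k$, using the excess improvement estimate \eqref{ex-imp} from Lemma \ref{Ex-imp} as the engine and tracking simultaneously the two excess quantities $\Psi$ and $C_\alpha$ (the latter being needed precisely to keep $|D\ell_{z_0,\theta^k\rho}|$ under control so that the error terms $\omega(\cdots)$, $V(\cdot)$ and $(a^q|D\ell|^q+b^q)\rho^q$ in $\Psi_*$ stay small). First I would fix $\theta_*\in(0,1/8]$ by imposing $C_3\theta_*^2\le\theta_*^{2\alpha}/2$ — possible since $\alpha<1$ — so that one application of \eqref{ex-imp} gains a genuine factor $\theta_*^{2\alpha}$ after absorbing the $C_3$. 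The constant $\delta$ from Lemma \ref{A-caloric} is then determined by $\theta_*^{n+p+4}$, and with $\delta$ in hand I choose $\rho_*$, $\varepsilon_*$ and $\kappa_*$ small enough (in that dependency order) to force the three smallness hypotheses (i), (ii), (iii) of Lemma \ref{Ex-imp} at scale $\rho$: hypothesis (ii) is immediate from $\Psi(\rho)<\varepsilon_*$ once $\varepsilon_*\le\theta_*^{n+4}/(4n(n+2))$; hypotheses (i) and (iii) follow once $\varepsilon_*$, $\kappa_*$ and $\rho_*$ are small because $\Psi_*(\rho)\le\Psi(\rho)+\omega(\cdots)+V(\rho)+(a^q|D\ell|^q+b^q)\rho^q$ and each summand is controlled — here one uses \eqref{affine-esti2} to bound $|D\ell_{z_0,\rho}|$ by $C_\alpha(\rho)^{1/2}$-type quantities, and the definition of $C_\alpha$ to bound the argument of $\omega$.

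Next I would run the induction. Assume $(A_j)$ holds for all $j\le k$. The key intermediate point is a bound on the gradients: from \eqref{affine-esti1} and a telescoping argument (exactly as in the last display of the proof of Lemma \ref{Ex-imp}, iterated) one gets $|D\ell_{z_0,\theta_*^j\rho}|\le c(1+|D\ell_{z_0,\rho}|)$ uniformly in $j\le k$, hence $1+|D\ell_{z_0,\theta_*^k\rho}|$ is comparable to $1+|D\ell_{z_0,\rho}|$, which in turn is controlled by $\kappa_*^{1/2}$ via \eqref{affine-esti2}. With this, the three smallness conditions of Lemma \ref{Ex-imp} are verified at scale $\theta_*^k\rho$ (using $(A_k)$ for (ii), and $(A_k)$ together with the gradient bound, the $V(\theta_*^k\rho)\le V(\rho_*)$ smallness, and $\rho_*$-smallness for (i), (iii)), so \eqref{ex-imp} gives $\Psi(\theta_*^{k+1}\rho)\le C_3\theta_*^2\Psi_*(\theta_*^k\rho)$. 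Bounding $\Psi_*(\theta_*^k\rho)\le\Psi(\theta_*^k\rho)+(\text{small})<2\varepsilon_*$ and using $C_3\theta_*^2\le\tfrac12$ yields $\Psi(\theta_*^{k+1}\rho)<\varepsilon_*$, which is the first half of $(A_{k+1})$.

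For the second half I would control $C_\alpha(\theta_*^{k+1}\rho)$ by relating $u-u_{z_0,\rho}$ to $u-\ell_{z_0,\rho}$. Write
\[
 C_\alpha(\theta_*^{k+1}\rho)\le c\,\theta_*^{-2\alpha}\bigl[(\theta_*^{k+1}\rho)^{-2}\dashint_{Q_{\theta_*^{k+1}\rho}}|u-\ell_{z_0,\theta_*^{k+1}\rho}|^2dz\bigr](\theta_*^{k+1}\rho)^{2\alpha}\cdot(\theta_*^{k+1}\rho)^{-2\alpha+2}\cdot(\ldots)
\]
— more cleanly: decompose $|u-u_{z_0,\theta_*^{k+1}\rho}|^2\le c(|u-\ell_{z_0,\theta_*^{k+1}\rho}|^2 + |D\ell_{z_0,\theta_*^{k+1}\rho}|^2(\theta_*^{k+1}\rho)^2)$, so that $C_\alpha(\theta_*^{k+1}\rho)\le c(1+|D\ell_{z_0,\theta_*^{k+1}\rho}|)^2\bigl[\Psi(\theta_*^{k+1}\rho)(\theta_*^{k+1}\rho)^{2-2\alpha} + (\theta_*^{k+1}\rho)^{2-2\alpha}\bigr]$, and since $2-2\alpha>0$ and $\theta_*^{k+1}\rho<\rho_*$ this is $\le c(1+|D\ell|)^2\rho_*^{2-2\alpha}\cdot 2\varepsilon_*<\kappa_*$ provided $\rho_*$ and $\varepsilon_*$ were chosen small relative to $\kappa_*$ and the gradient bound. (Alternatively one iterates: $C_\alpha(\theta_*^{k+1}\rho)$ can be estimated by a geometric-series argument summing the per-step increments of $u_{z_0,\cdot}$ and $D\ell_{z_0,\cdot}$, each of which is $\lesssim\theta_*^{j\alpha}$ by the already-established decay of $\Psi$.) This closes the induction. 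The main obstacle I anticipate is the bookkeeping in the correct order of choice of the constants $\theta_*,\delta,\rho_*,\varepsilon_*,\kappa_*$ — in particular ensuring the gradient-comparison constant $c$ coming from the telescoped \eqref{affine-esti1} is absorbable uniformly in $k$, which forces $\varepsilon_*$ to be chosen after $\theta_*$ and small enough that the series $\sum_j\theta_*^{j\alpha}\sqrt{\varepsilon_*}$ stays below the threshold making $1+|D\ell_{z_0,\theta_*^k\rho}|\le 2(1+|D\ell_{z_0,\rho}|)$ for every $k$; everything else is a routine propagation of smallness.
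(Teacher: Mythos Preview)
Your overall architecture (induction on $k$, with Lemma~\ref{Ex-imp} as the engine and the simultaneous tracking of $\Psi$ and $C_\alpha$) matches the paper's, but there is a genuine gap in the way you control $|D\ell_{z_0,\theta_*^k\rho}|$.

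The telescoping via \eqref{affine-esti1} does \emph{not} give a uniform bound. One application yields
\[
 \lvert D\ell_{z_0,\theta_*^{j+1}\rho}-D\ell_{z_0,\theta_*^{j}\rho}\rvert
 \leq \sqrt{\tfrac{n(n+2)}{\theta_*^{n+4}}}\,\sqrt{\Psi(\theta_*^{j}\rho)}\,
 \bigl(1+\lvert D\ell_{z_0,\theta_*^{j}\rho}\rvert\bigr),
\]
and under $(A_j)$ you only know $\Psi(\theta_*^{j}\rho)<\varepsilon_*$, not geometric decay. Hence $1+|D\ell_{z_0,\theta_*^{k}\rho}|\le(1+\eta)^k(1+|D\ell_{z_0,\rho}|)$ with $\eta=\sqrt{n(n+2)\theta_*^{-n-4}\varepsilon_*}$, which blows up in $k$. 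The summable series $\sum_j\theta_*^{j\alpha}\sqrt{\varepsilon_*}$ you invoke presumes $\sqrt{\Psi(\theta_*^j\rho)}\lesssim\theta_*^{j\alpha}$, but this is precisely what fails: Lemma~\ref{Ex-imp} gives $\Psi(\theta_*^{j+1}\rho)\le C_3\theta_*^2\Psi_*(\theta_*^j\rho)$, and $\Psi_*$ carries the non-decaying additive terms $V(\theta_*^j\rho)$ and $\omega(\cdots)$, so iteration only yields $\Psi<\varepsilon_*$, never geometric decay. The same issue contaminates your $C_\alpha$ step, since you decompose around $\ell_{z_0,\theta_*^{k+1}\rho}$ and thus need a bound on $|D\ell_{z_0,\theta_*^{k+1}\rho}|$ before $(A_{k+1})$ is available.

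The paper's fix is simple and avoids telescoping entirely: apply \eqref{affine-esti2} at scale $\theta_*^k\rho$ with $D\ell=0$ to get
\[
 \lvert D\ell_{z_0,\theta_*^k\rho}\rvert^2
 \le\frac{n(n+2)}{(\theta_*^k\rho)^2}\dashint_{Q_{\theta_*^k\rho}(z_0)}\lvert u-u_{z_0,\theta_*^k\rho}\rvert^2dz
 = n(n+2)(\theta_*^k\rho)^{2\alpha-2}C_\alpha(\theta_*^k\rho)
 < n(n+2)(\theta_*^k\rho)^{2\alpha-2}\kappa_*.
\]
This bound is \emph{not} uniform in $k$, but the only combinations that appear in $\Psi_*$ and in the $C_\alpha$ estimate are $(\theta_*^k\rho)^q|D\ell_{z_0,\theta_*^k\rho}|^q\le c\,\kappa_*^{q/2}(\theta_*^k\rho)^{q\alpha}$ and $(\theta_*^k\rho)^{2-2\alpha}|D\ell_{z_0,\theta_*^k\rho}|^2\le n(n+2)\kappa_*$, both of which \emph{are} controlled. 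For the $C_\alpha(\theta_*^{k+1}\rho)$ bound the paper decomposes around $\ell_{z_0,\theta_*^k\rho}$ (not $\theta_*^{k+1}\rho$), so only $(A_k)$-level information is needed. With this single modification your scheme goes through; the order of constant choices is then $\theta_*\to\delta\to\varepsilon_*\to\kappa_*\to\rho_*$ (note $\kappa_*$ must be chosen after $\varepsilon_*$, via $\omega(\kappa_*)<\varepsilon_*$, and $\rho_*$ last).
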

\begin{proof}
First set 
\[
 \theta_*:=\min \left\{ \left(\frac{1}{16n(n+2)}\right)^{1/(2-2\alpha)}, \frac{1}{\sqrt{4C_3}}\right\} \leq \frac{1}{8}, 
\]
and take $\varepsilon_*>0$ which satisfies 
\[
 \varepsilon_*\leq \frac{\theta_*^{n+4}}{16n(n+2)} \qquad \text{and}\qquad 
 \mu^{1/2}\left(\sqrt{4\varepsilon_*}\right)+\sqrt{4\varepsilon_*}\leq \frac{\delta}{2}. 
\] 
Note that the choice of $\theta_*$ fixes the constant $\delta=\delta(n,N,\lambda,L,p,\theta_*^{n+p+4})>0$ from Lemma \ref{A-caloric}. 
Then choose $\kappa_*>0$ so small that 
\[
 \omega(\kappa_*)<\varepsilon_*. 
\]
Finally, we take $\rho_*>0$ which satisfies 
\[
 \rho_*\leq \min\{ \rho_0, \kappa_*^{1/(2-2\alpha)},1\}, \quad V(\rho_*)<\varepsilon_* \quad\text{and}\quad 
 \left\{\left(a\sqrt{n(n+2)\kappa_*}\right)^q+b^q\right\}\rho_*^{q\alpha}<\varepsilon_*. 
\]
Now we prove the assertion \eqref{Ak} by induction. First using \eqref{affine-esti2} from Lemma \ref{affine-esti} with 
$\ell\equiv u_{z_0,\theta^k\rho}$ and the assumption \eqref{Ak}, we obtain 
\begin{align}
 \lvert D\ell_{z_0,\theta^k\rho}\rvert^2 
 &\leq \frac{n(n+2)}{(\theta^k\rho)^2}\dashint_{Q_{\theta^k\rho}(z_0)}\lvert u-u_{z_0,\theta^k\rho}\rvert^2 dz \notag\\ 
 &\leq n(n+2)(\theta^k\rho)^{2-2\alpha}C_\alpha (z_0,\theta^k\rho) \notag\\ 
 &\leq n(n+2)\rho_*^{2-2\alpha}\kappa_* . \label{Dell-esti}
\end{align}
Thus, we have 
\begin{align*}
 \Psi_*(\theta^k\rho)\leq &\Psi(\theta^k\rho) +\omega(C_\alpha(z_0,\theta^\rho)) +V(\theta^k\rho)
  +(a^q\lvert D\ell_{z_0,\theta^\rho}\rvert^q +b^q)(\theta^k\rho)^q \\ 
 \leq &\varepsilon_* +\omega(\kappa_*) +V(\rho_*) +\left\{\left( a\sqrt{n(n+2)\kappa_*}\right)^q +b^q\right\}\rho_*^{q\alpha}
  <4\varepsilon_* . 
\end{align*}
This implies 
\begin{equation}
 \mu^{1/2}\left(\sqrt{\Psi_*(\theta^\rho)}\right) +\sqrt{\Psi_*(\theta^k\rho)}
 <\mu^{1/2} \left(\sqrt{4\varepsilon_*}\right) +\sqrt{4\varepsilon_*}\leq \frac{\delta}{2},  \label{smc1}
\end{equation} 
and 
\begin{equation}
 \Psi(\theta^k\rho) <\varepsilon_*< \frac{\theta^{n+4}}{4n(n+2)}. \label{smc2}
\end{equation}
Furthermore, we have 
\begin{equation}
 \gamma(\theta^k\rho) =\left[ \Psi_*^{q/2}(\theta^k\rho) +\delta^{-q}(\theta^k\rho)^q 
 (a\lvert D\ell_{z_0,\theta^k\rho}\rvert +b)^q\right]^{1/q}\leq 1.  \label{smc3}
\end{equation}
To check \eqref{smc3}, the first term of \eqref{smc3} can be estimated by the choice of $\varepsilon_*$ and the fact 
$\Psi_*(\theta^k\rho)<1$: 
\[
 \Psi_*^{q/2}(\theta^k\rho)\leq \Psi_*^{1/2}(\theta^k\rho)<\sqrt{4\varepsilon_*}\leq \frac{\delta}{2}. 
\]
To estimate the second term of \eqref{smc3}, using \eqref{Dell-esti} and the fact $\rho_*^{\alpha -1}\geq 1$, we obtain 
\begin{align*}
 \delta^{-q}(\theta^k\rho)(a\lvert D\ell_{z_0,\theta^k\rho}\rvert +b)^q 
 &\leq \delta^{-q}\rho_*^q\left( a\rho_*^{\alpha -1}\sqrt{n(n+2)\kappa_*}+b\right)^q \\ 
 &\leq \delta^{-q}\rho_*^{q\alpha}\left( a\sqrt{n(n+2)\kappa_*}+b\right)^q \\ 
 &\leq 2^{q/p}\delta^{-q}\rho_*^{q\alpha}\varepsilon_* \\ 
 &\leq 2^{-4+q/p}\delta^{2-q}\leq\frac{\delta}{8}. 
\end{align*}
Therefore, we have \eqref{smc3} and this allowed us to apply Lemma \ref{excess-imp} with the radius $\theta^k\rho$ instead of $\rho$, 
which yields 
\[
 \Psi(\theta^{k+1}\rho)\leq C_3\theta^2\Psi_*(\theta^k\rho) <4C_3\theta^2\varepsilon_* \leq \varepsilon_* .
\]
Thus, we have established the first part of the assertion $(A_{k+1})$ and it remains to prove the second one, that is, 
$C_\alpha(z_0,\theta^{k+1}\rho)$. For this aim, we first compute
\[
 \frac{1}{(\theta_*^k\rho)^2}\dashint_{Q_{\theta_*^k\rho}(z_0)}
 |u-\ell_{z_0,\theta_*^k\rho}|^2 dz
 \leq (1+|D\ell_{z_0,\theta_*^k\rho}|)^2\Psi(\theta_*^k\rho)
 \leq 2\varepsilon_*+2\varepsilon_*|D\ell_{z_0,\theta_*^k\rho}|^2
\]
where we used the assumption \eqref{Ak} in the last step. 
Since $\ell_{z_0,\theta_*^k\rho}(x)=u_{z_0,\theta_*^k\rho}+D\ell_{z_0,\theta_*^k\rho} (x-x_0)$, we can estimate
\begin{align*}
 C_\alpha(z_0,\theta_*^{k+1}\rho)&\leq
 (\theta_*^{k+1}\rho)^{-2\alpha}\dashint_{Q_{\theta_*^{k+1}\rho}(z_0)} |u-u_{z_0,\theta_*^k\rho}|^2dz \\
 &\leq 2(\theta_*^{k+1}\rho)^{-2\alpha}\left[\dashint_{Q_{\theta_*^{k+1}\rho}(z_0)}
 |u-\ell_{z_0,\theta_*^k\rho}|^2dz+|D\ell_{z_0,\theta_*^k\rho}|^2(\theta_*^{k+1}\rho)^2\right] \\
 &\leq 2(\theta_*^{k+1}\rho)^{-2\alpha}\left[\theta_*^{-n-2}\dashint_{Q_{\theta_*^k\rho}(z_0)}
 |u-\ell_{z_0,\theta_*^k\rho}|^2dz+|D\ell_{z_0,\theta_*^k\rho}|^2(\theta_*^{k+1}\rho)^2\right] \\
 &\leq 4(\theta_*^k\rho)^{2-2\alpha}\left[\varepsilon_*\theta_*^{-n-2-2\alpha}
 +|D\ell_{z_0,\theta_*^k\rho}|^2(\varepsilon_*\theta_*^{-n-2-2\alpha}+\theta_*^{2-2\alpha})\right] .
\end{align*}
Recalling the choice of $\rho_*$, $\varepsilon_*$ and $\theta_*$, we deduce
\begin{align*}
 C_\alpha(x_0,\theta^{k+1}\rho)
 &\leq 4{\rho_*}^{2-2\alpha}\left[\varepsilon_*\theta_*^{-n-2-2\alpha}+n(n+2)
 \kappa_*{\rho_*}^{2-2\alpha}(\varepsilon_*\theta_*^{-n-2-2\alpha}+\theta_*^{2-2\alpha})\right]\\
 &\leq\frac{1}{4}{\rho_*}^{2-2\alpha}\theta_*^{2-2\alpha}
 +8n(n+2)\kappa_*\theta_*^{2-2\alpha} \\
 &\leq\frac{1}{4}\kappa_*+\frac{1}{2}\kappa_*<\kappa_*.
\end{align*}
This proves the second part of the assertion $(A_{k+1})$ and we completes the proof. 
\end{proof}
To obtain the regularity result (Theorem \ref{pr}), it is similar arguments in \cite{BDHS} with using the integral characterization 
of H\"older continuous functions with respect to the parabolic metric of Campanato-Da Prato \cite{Prato}.  
\mbox{}\\
%{\bf Acknowledgments}\\

\def\cprime{$'$}

\providecommand{\bysame}{\leavevmode\hbox to3em{\hrulefill}\thinspace}

\providecommand{\MR}{\relax\ifhmode\unskip\space\fi MR }

% \MRhref is called by the amsart/book/proc definition of \MR.

\providecommand{\MRhref}[2]{%
  
\href{http://www.ams.org/mathscinet-getitem?mr=#1}{#2}
}

\providecommand{\href}[2]{#2}

\mbox{}\\
Taku Kanazawa \\
Graduate School of Mathematics \\
Nagoya University \\
Chikusa-ku, Nagoya, 464-8602, JAPAN \\
taku.kanazawa@math.nagoya-u.ac.jp
\end{document}